\documentclass[11pt]{amsart}
\usepackage{mathrsfs,amsmath,amssymb,amsthm}
\usepackage{comment}
\usepackage{enumerate}
\newtheorem{theorem}{Theorem}

\newtheorem{Example}{Example}
\newtheorem{proposition}{Proposition}
\newtheorem{lemma}{Lemma}
\newtheorem{corollary}{Corollary}

\setlength{\textheight}{23cm}
\setlength{\textwidth}{16cm}
\setlength{\oddsidemargin}{0cm}
\setlength{\evensidemargin}{0cm}
\setlength{\topmargin}{0cm}

\newcommand{\zz}{\mathbf z}
\newcommand{\xx}{\mathbf x}
\newcommand{\yy}{\mathbf y}
\newcommand{\ww}{\mathbf w}

\def\mapright#1{\smash{\mathop{\longrightarrow}\limits^{{#1}}}}
\def\mapdown#1{\Big\downarrow\rlap{$\vcenter{\hbox{$#1$}}$}}

\title{Join theorem for real analytic singularities}
\author{Kazumasa Inaba} 
\address{Faculty of Education, Iwate University, 18-33 Ueda 3-chome Morioka, Iwate 020-8550, Japan}
\email{inaba@iwate-u.ac.jp}

\begin{document}
\renewcommand{\thefootnote}{\fnsymbol{footnote}}
\footnote[0]{2010\textit{ Mathematics Subject Classification}.
Primary 32S55; Secondary: 32S40, 57Q45.}

\footnote[0]{\textit{Key words and phrases}. 
Milnor fibration, Seifert form, enhanced Milnor number}

\maketitle


\begin{abstract}
Let $f_{1} : (\Bbb{R}^{n}, \mathbf{0}_{n}) \rightarrow (\Bbb{R}^{p}, \mathbf{0}_{p})$ 
and $f_{2} : (\Bbb{R}^{m}, \mathbf{0}_{m}) \rightarrow (\Bbb{R}^{p}, \mathbf{0}_{p})$ 
be analytic germs of independent variables, where $n, m \geq p \geq 2$. 
In this paper, we assume that $f_{1}, f_{2}$ and $f = f_{1} + f_{2}$ satisfy $a_{f}$-condition. 
Then we show that 
the tubular Milnor fiber of $f$ is homotopy equivalent to 
the join of tubular Milnor fibers of $f_1$ and $f_2$. 
If $p = 2$, the monodromy of the tubular Milnor fibration of $f$ is equal to 
the join of the monodromies of the tubular Milnor fibrations of $f_1$ and $f_2$ up to homotopy.
\end{abstract}

\section{Introduction}
Let $f : (\Bbb{R}^{N}, \mathbf{0}_{N}) \rightarrow (\Bbb{R}^{p}, \mathbf{0}_{p})$
be an analytic germ, 
where $N \geq p \geq 2, \mathbf{0}_{N}$ 
and $\mathbf{0}_{p}$ are the origins of $\Bbb{R}^{N}$ and $\Bbb{R}^{p}$ respectively. 
Take a positive real number $\varepsilon_{0}$ sufficiently small if necessary. 
Assume that for any $0 < \varepsilon \leq \varepsilon_{0}$, 
there exists a positive real number $\delta$ such that 
$\delta \ll \varepsilon$ and 
\[
f : B^{N}_{\varepsilon} \cap f^{-1}(D^{p}_{\delta} \setminus \{\mathbf{0}_{p}\}) 
\rightarrow D^{p}_{\delta} \setminus \{\mathbf{0}_{p}\} 
\]
is a locally trivial fibration, 
where 
$B^{N}_{\varepsilon} = \{\xx \in \Bbb{R}^{N} \mid \| \xx \| \leq \varepsilon \}$ and  
$D^{p}_{\delta} = \{\ww \in \Bbb{R}^{p} \mid \| \ww \| \leq \delta \}$. 
The isomorphism class of the above fibration does not depend on the choice of $\varepsilon$ and $\delta$. 
This map is called the \textit{tubular Milnor fibration of $f$}. 
If $f_1$ and $f_2$ are holomorphic functions of independent variables, 
the following theorem is known. 

\begin{theorem}[Join theorem]
Let $f_{1} : (\Bbb{C}^{n}, O_{n}) \rightarrow (\Bbb{C}, 0)$ and 
$f_{2} : (\Bbb{C}^{m}, O_{m}) \rightarrow (\Bbb{C}, 0)$ be holomorphic functions of independent variables 
$\zz = (z_{1}, \dots, z_{n})$ and $\ww = (w_{1}, \dots, w_{m})$. 
Here $O_{N}$ is the origin of $\Bbb{C}^{N}$. 
Set $f(\zz, \ww) = f_{1}(\zz) + f_{2}(\ww)$. 
Then 
the Milnor fiber of $f$ is homotopy equivalent to the join of the Milnor fibers of $f_1$ and $f_2$ and 
the monodromy of $f$ is equal to the join of the monodromies of $f_1$ and $f_2$ up to homotopy. 
\end{theorem}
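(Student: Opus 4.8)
\noindent\textit{Proof proposal.}\ The plan is to build an explicit model of the join $F_1*F_2$ inside the Milnor fiber of $f$ and then to read off the monodromy from that model, following the classical pattern. I would begin by fixing compatible data: Milnor radii $\varepsilon_1,\varepsilon_2$ for $f_1$ and $f_2$ (living in $\mathbb{C}^n$ and $\mathbb{C}^m$) and a tube radius $\delta$ with $\delta\ll\min(\varepsilon_1,\varepsilon_2)$, so that $f_i^{-1}(D^2_\delta)\cap B_{\varepsilon_i}$ is the Milnor tube of $f_i$, its fiber over a small nonzero value is $F_i$, and the monodromy $h_i$ is realised by the loop $\eta_i\mapsto e^{2\pi\sqrt{-1}\,t}\eta_i$. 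I would then invoke the standard fact that the Milnor fiber $F$ of $f=f_1+f_2$ may be computed in the product of these two tubes. Setting $g=(f_1,f_2)\colon\mathbb{C}^{n+m}\to\mathbb{C}^2$, this says that for a small real $\eta>0$ one has $F\simeq g^{-1}(\Delta)$, where $\Delta=\{(u,v)\mid u+v=\eta,\ |u|\le\delta,\ |v|\le\delta\}$ is a convex $2$-disk inside the complex line $\{u+v=\eta\}$ which meets $\{u=0\}$ in the single point $(0,\eta)$, meets $\{v=0\}$ in the single point $(\eta,0)$, and avoids the origin because $\eta\neq 0$. It then suffices to produce a homotopy equivalence $g^{-1}(\Delta)\simeq F_1*F_2$ carrying the monodromy of $f$ to the join monodromy $h_1*h_2\colon(x,y,u)\mapsto(h_1x,h_2y,u)$ up to homotopy.

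The second step is the geometric core. Over $(D^2_\delta\setminus\{0\})\times(D^2_\delta\setminus\{0\})$ the map $g$ is a locally trivial fibration with fiber $F_1\times F_2$ (the product of the two tubular Milnor fibrations); over a point of $\{u=0\}\setminus\{0\}$ its fiber is $\bigl(f_1^{-1}(0)\cap B_{\varepsilon_1}\bigr)\times F_2$, whose first factor is a contractible cone, hence the fiber is homotopy equivalent to $F_2$; symmetrically over $\{v=0\}\setminus\{0\}$ the fiber is homotopy equivalent to $F_1$. I would now deformation-retract the convex disk $\Delta$ onto the straight segment $I$ joining $(\eta,0)$ to $(0,\eta)$ and lift this, through the fibration over $\Delta\setminus\{(\eta,0),(0,\eta)\}$, to a deformation retraction of $F\simeq g^{-1}(\Delta)$ onto $g^{-1}(I)$. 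Over the open segment $g^{-1}(I)$ is a trivial bundle $F_1\times F_2\times(0,1)$; approaching the endpoint over $(\eta,0)$ the fibers $f_2^{-1}(v)\cap B_{\varepsilon_2}$ degenerate onto the contractible cone $f_2^{-1}(0)\cap B_{\varepsilon_2}$, so the $F_2$-direction collapses and the fiber becomes $F_1$, and symmetrically the $F_1$-direction collapses at the other endpoint. Thus $g^{-1}(I)$ is, up to homotopy, the double mapping cylinder of the two projections $F_1\times F_2\to F_1$ and $F_1\times F_2\to F_2$, i.e.\ the join $F_1*F_2$, the join coordinate being $u=|f_1|/(|f_1|+|f_2|)$. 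The main obstacle is precisely these two end-collapses: one must control how $f_i^{-1}(s)\cap B_{\varepsilon_i}$ degenerates onto the contractible cone $f_i^{-1}(0)\cap B_{\varepsilon_i}$ as $s\to0$, uniformly enough that the identification with the join holds up to homotopy; this, together with the reduction to the product-of-tubes neighbourhood, is the technical heart of the argument.

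For the monodromy I would trace the loop $\eta\mapsto e^{2\pi\sqrt{-1}\,t}\eta$, $t\in[0,1]$, through this model. Rotating the value of $f$ rotates the line $\{u+v=\eta\}$ and the disk $\Delta$, carrying $I$ to the segment joining $(e^{2\pi\sqrt{-1}\,t}\eta,0)$ and $(0,e^{2\pi\sqrt{-1}\,t}\eta)$; at join coordinate $u$ the value of $f_1$ then traverses the loop $(1-u)\eta\,e^{2\pi\sqrt{-1}\,t}$ once around the origin and the value of $f_2$ the loop $u\eta\,e^{2\pi\sqrt{-1}\,t}$ once around the origin, while the ratio $u=|f_1|/(|f_1|+|f_2|)$ stays fixed. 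Transporting back along the fibration therefore realises the monodromy of $f$ as $(x,y,u)\mapsto(h_1x,h_2y,u)$, which is $h_1*h_2$ up to homotopy. As an independent check of the homological content one can instead run the Künneth-type computation underlying the join on the tubular Milnor fibrations of $f_1$ and $f_2$, recovering $\widetilde H_k(F)\cong\widetilde H_k(F_1*F_2)\cong\bigoplus_{i+j=k-1}\widetilde H_i(F_1)\otimes\widetilde H_j(F_2)$ together with the identification of the algebraic monodromy with $h_{1*}\otimes h_{2*}$.
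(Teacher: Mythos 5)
Your proposal follows essentially the same route as the paper's own argument (its proof of the analogous Theorem~2, modeled on Sakamoto): restrict the fiber $f^{-1}(\eta)$ over the segment of values joining $(\eta,0)$ to $(0,\eta)$, identify the fibers over the open segment as $F_1\times F_2$, collapse the contractible sets $f_i^{-1}(0)$ at the two ends to obtain the join $F_1*F_2$, and realize the monodromy by simultaneously rotating the values of $f_1$ and $f_2$, giving $h_1*h_2$. The outline is correct, and the end-collapse you flag as the technical heart is exactly what the paper handles via the conic structure of $V(f_i)$ and the quotient map onto $(U_1/V(f_1))\times(U_2/V(f_2))$.
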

Join theorem is algebraically proved by M. Sebastiani and R. Thom for isolated singularities~\cite{ST}. 
M. Oka showed this for weighted homogeneous singularities \cite{O-1}. 
For general complex singularities, this is proved by K. Sakamoto \cite{S2}. 
In \cite{KN}, 
L. H. Kauffman and W. D. Neumann studied fiber structures and Seifert forms of 
links defined by tame isolated singularities of real analytic germs of independent variables. 
In this paper, we study Join theorem for more general real analytic singularities. 

To show the existence of Milnor fibrations for real analytic singularities, 
we consider stratifications of analytic sets. 
Let $f : (\Bbb{R}^{N}, \mathbf{0}_{N}) \rightarrow (\Bbb{R}^{p}, \mathbf{0}_{p})$ be a smooth map 
and $\mathcal{S}$ be a stratification of $B^{N}_{\varepsilon} \cap f^{-1}(0)$. 
The map $f$ satisfies \textit{$a_{f}$-condition}
if 
$B^{N}_{\varepsilon} \setminus f^{-1}(0)$ 
has no critical point and satisfies the following condition: 
For any sequence $p_{\nu} \in B^{N}_{\varepsilon} \setminus f^{-1}(0)$ 
such that 
\[
T_{p_{\nu}}f^{-1}(f(p_{\nu})) \rightarrow \tau, \ \ p_{\nu} \rightarrow p_{\infty} \in M, 
\]
where $M \in \mathcal{S}$, 
we have $T_{p_{\infty}}M \subset \tau$. 
A stratification $\mathcal{S}$ is called \textit{Whitney $(a)$-regular} 
if for any pair of strata $(S_{1}, S_{2})$ of $\mathcal{S}$ 
and any point $p \in S_{1} \cap \overline{S_{2}}$, 
$(S_{1}, S_{2})$ satisfies the following condition: 
For any sequence $q_{\nu} \in S_{2}$ satisfying 
\[
q_{\nu} \rightarrow p, \ \ T_{q_{\nu}}S_{2} \rightarrow T, 
\]
we have $T_{p}S_{1} \subset T$.

Let $f_{1} : (\Bbb{R}^{n}, \mathbf{0}_{n}) \rightarrow (\Bbb{R}^{p}, \mathbf{0}_{p})$ 
and $f_{2} : (\Bbb{R}^{m}, \mathbf{0}_{m}) \rightarrow (\Bbb{R}^{p}, \mathbf{0}_{p})$ 
be analytic germs, 
where $n, m \geq p \geq 2$. 
Set $V(f_{1}) = f_{1}^{-1}(0) \cap B^{n}_{\varepsilon}$ and 
$V(f_{2}) = f_{2}^{-1}(0) \cap B^{m}_{\varepsilon}$
for $0 < \varepsilon \ll 1$. 
We denote a stratification of 
$V(f_{1})$ (resp. $V(f_{2})$) 
by $\mathcal{S}_{1}$ (resp. $\mathcal{S}_{2}$). 
Assume that $f_{1}$ and $f_{2}$ satisfy the following conditions: 
\renewcommand{\theenumi}{\roman{enumi}}
\begin{enumerate}
\item
$\mathbf{0}_{p} \in \Bbb{R}^{p}$ is an isolated critical value of $f_{j}$ for $j = 1, 2$, 
\item
$f_{j}$ satisfies $a_{f}$-condition with respect to $\mathcal{S}_{j}$ for $j = 1, 2$. 
\end{enumerate}
\noindent
Since $V(f_{1})$ and $V(f_{2})$ are real analytic sets, we may assume that 
$\mathcal{S}_{1}$ and $\mathcal{S}_{2}$ are Whitney stratifications. See \cite{H}. 
Thus the stratifications $\mathcal{S}_{1}$ and $\mathcal{S}_{2}$ are Whitney $(a)$-regular. 
We take $\varepsilon$ sufficiently small if necessary. 
Then the sphere $\partial B^{n}_{\varepsilon}$ (resp. $\partial B^{m}_{\varepsilon}$) 
intersects $M_{1}$ (resp. $M_{2}$)
transversely for any $M_{1} \in \mathcal{S}_{1}$ and $M_{2} \in \mathcal{S}_{2}$. 
See the proof of \cite[Lemma 3.2]{BV}. 

Put $U_{1} = \{ \xx \in B^{n}_{\varepsilon} \mid \| f_{1}(\xx)\| \leq \delta \}$ 
and $U_{2} = \{ \yy \in B^{m}_{\varepsilon} \mid \| f_{2}(\yy)\| \leq \delta \}$, 
where $0 < \delta \ll \varepsilon$. 
By the above conditions and the Ehresmann fibration theorem \cite{W}, we may assume that 
\[
f_{j} : U_{j} \setminus V(f_{j}) \rightarrow D^{p}_{\delta} \setminus \{0\}
\]
is a locally trivial fibration for $j = 1, 2$. 

Let $f : (\Bbb{R}^{n} \times \Bbb{R}^{m}, \mathbf{0}_{n+m}) \rightarrow (\Bbb{R}^{p}, \mathbf{0}_{p})$ 
be the analytic germ 
defined by $f = f_{1} + f_{2}$. Put $V(f) = f^{-1}(0) \cap (U_{1} \times U_{2})$. 
By \cite[Proposition 5.2]{ACT}, $f$ also satisfies the conditions (i) and (ii). See Section $2.1$. 
The main theorem of this paper is the following. 

\begin{theorem}
Let $f_{1} : (\Bbb{R}^{n}, \mathbf{0}_{n}) \rightarrow (\Bbb{R}^{p}, \mathbf{0}_{p})$ 
and $f_{2} : (\Bbb{R}^{m}, \mathbf{0}_{m}) \rightarrow (\Bbb{R}^{p}, \mathbf{0}_{p})$ 
be analytic germs of independent variables, where $n, m \geq p \geq 2$. 
Assume that $f_1$ and $f_2$ satisfy the conditions (i) and (ii). 
Set $f = f_{1} + f_{2}$.  
Then the fiber of the tubular Milnor fibration of $f$ is homotopy equivalent to 
the join of the fibers of the tubular Milnor fibrations of $f_1$ and $f_2$. 

Moreover, if $p = 2$, the monodromy of the tubular Milnor fibration of $f$ is equal to 
the join of the monodromies of $f_1$ and $f_2$ up to homotopy. 
\end{theorem}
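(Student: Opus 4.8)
The plan is to realize the tubular Milnor fiber of $f=f_1+f_2$ explicitly as a space built out of the fibers of $f_1$ and $f_2$, and then to deform it onto a join. First I would set up coordinates: for a generic value $c\in D^p_\delta\setminus\{\mathbf 0_p\}$, a point $(\xx,\yy)\in f^{-1}(c)$ satisfies $f_1(\xx)+f_2(\yy)=c$, so the fiber $F=f^{-1}(c)\cap(U_1\times U_2)$ fibers over the portion of $\Bbb R^p$ hit by $f_1$ with fiber over $a$ equal to $f_1^{-1}(a)\times f_2^{-1}(c-a)$. The $a_f$-condition on $f_1$, $f_2$ and $f$ is exactly what I need to guarantee that all the relevant restrictions are locally trivial fibrations (via Ehresmann, as already invoked in the excerpt), so that this decomposition is honest over the open part of the parameter space. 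The key geometric observation is that near $V(f_1)$ (where $a\to\mathbf 0_p$) the factor $f_1^{-1}(a)$ degenerates to the Milnor fiber boundary configuration of $f_1$, and symmetrically for $f_2$ as $c-a\to\mathbf 0_p$; away from both degeneracy loci the fiber is (homotopy) a product.

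The homotopy-equivalence half then follows the Sakamoto–Oka scheme. I would write $D^p_\delta\setminus\{\mathbf 0_p\}\simeq S^{p-1}$ and stratify the $a$-parameter region according to which of $\|a\|$, $\|c-a\|$ is small; this exhibits $F$ as glued from two pieces, one a mapping-cylinder-type neighborhood over the locus where $f_1$-values are small (contributing (cone on $\partial F_1$)$\times F_2$-type pieces) and one over where $f_2$-values are small, with the overlap being $F_1\times F_2$. A Mayer–Vietoris / homotopy-pushout argument identifies this gluing with the homotopy pushout defining $F_1\ast F_2$, namely $F_1\times CF_2\cup_{F_1\times F_2}CF_1\times F_2$. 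Care is needed because the fibers of a real analytic map need not be connected and the "Milnor fiber" here is a tubular, not a spherical-link, fiber; but all that is used is that $f_j:U_j\setminus V(f_j)\to D^p_\delta\setminus\{0\}$ is a locally trivial fibration with fiber $F_j$, so the pushout description goes through at the level of total spaces.

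For the monodromy statement with $p=2$, the base is $D^2_\delta\setminus\{0\}\simeq S^1$, so the monodromy of $f$ is the characteristic diffeomorphism $h_f$ of the fibration over a small circle. Tracing a loop $e^{2\pi i t}$ in the $c$-parameter, I would push the parameter $a$ around in a way compatible with the product structure: the loop lifts to a path in the $a$-plane that spends "most" of its time with $a$ near $0$ (so the monodromy acts as $h_{f_1}\times\Id$ on that part) and a complementary arc with $c-a$ near $0$ (acting as $\Id\times h_{f_2}$), and the composition of these two partial monodromies, under the identification of $F$ with $F_1\ast F_2$, is precisely the join $h_{f_1}\ast h_{f_2}$ — this is the classical picture that $\zeta$ on the join is the join of the $\zeta$'s, realized at the level of maps up to homotopy. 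The main obstacle, and the place I expect the real work to be, is the first part: making the decomposition $F\cong$ (bundle over $a$-space with fibers $f_1^{-1}(a)\times f_2^{-1}(c-a)$) genuinely a homotopy pushout of the right shape. In the holomorphic case one uses that $f_1,f_2$ have connected fibers and that the value spaces are $2$-dimensional with a clean "interpolation" $a=tc$; for $p\ge 3$ and for real analytic fibers that may be disconnected or have nontrivial topology, I must instead run the argument abstractly from the $a_f$-condition and Whitney $(a)$-regularity, controlling the behavior of $f_1^{-1}(a)$ as $a\to\mathbf 0_p$ uniformly — this uniform control near the degeneracy loci, and checking that the gluing data is the standard join gluing up to homotopy, is the technical heart of the proof.
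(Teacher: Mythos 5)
Your decomposition is in fact the one the paper uses: the map $F_{1}\colon Z_{\mathbf{t}}\to A$, $(\xx,\yy)\mapsto f_{1}(\xx)$, is a locally trivial fibration away from the two special values $\mathbf{0}_{p}$ and $\mathbf{t}$, with fibers $X_{\ww}\times Y_{\mathbf{t}-\ww}$; the paper then retracts $Z_{\mathbf{t}}$ onto $F_{1}^{-1}(J)$ for the straight segment $J$ from $\mathbf{0}_{p}$ to $\mathbf{t}$ (this ``interpolation'' works for every $p\ge 2$, so your worry that it is special to the holomorphic picture is unfounded) and identifies the result with the join by collapsing the two degenerate ends. Note what those ends are: $F_{1}^{-1}(\mathbf{0}_{p})=V(f_{1})\times Y_{\mathbf{t}}$ and $F_{1}^{-1}(\mathbf{t})=X_{\mathbf{t}}\times V(f_{2})$, i.e.\ cones on the links of the zero sets times Milnor fibers, not ``cone on $\partial F_{1}$'' pieces. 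The step you yourself flag as the technical heart is resolved exactly there: the local conic structure of the real analytic sets $V(f_{1})$, $V(f_{2})$ (Burghelea--Verona) plus the homotopy extension property show that collapsing $V(f_{1})\times Y_{\mathbf{t}}$ and $X_{\mathbf{t}}\times V(f_{2})$ to $\{\mathbf{0}_{n}\}\times Y_{\mathbf{t}}$ and $X_{\mathbf{t}}\times\{\mathbf{0}_{m}\}$ is a homotopy equivalence, and the quotient is homeomorphic to $X_{\mathbf{t}}*Y_{\mathbf{t}}$. So your pushout picture is the right one, but as written it is an outline whose decisive steps are left open.

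Two further genuine gaps. First, the space you call the Milnor fiber, $f^{-1}(c)\cap(U_{1}\times U_{2})$, is the fiber over a product of two Milnor tubes, whereas the tubular Milnor fibration of $f$ is taken inside a round ball $B^{n+m}_{\varepsilon'}$; comparing the two is a separate step (the paper's Lemma~7), and it is precisely here that the $a_{f}$-condition is used beyond giving local triviality: Lemma~1 provides uniform transversality of the nearby fibers $f^{-1}(\eta)$ to all spheres of radius between $\varepsilon_{1}$ and $\varepsilon_{0}$, which lets one squeeze $D^{n+m}_{\varepsilon_{1}}\subset U_{1}\times U_{2}\subset D^{n+m}_{\varepsilon_{2}}$ and upgrade the resulting weak equivalence to a homotopy equivalence of CW complexes. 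Your proposal never addresses this comparison. Second, for $p=2$ the claim that the composition of partial monodromies ``is precisely the join under the identification'' begs the question: the identification of the fiber with $F_{1}*F_{2}$ is a zig-zag of homotopy equivalences, and you must show the geometric monodromy is respected along it. The paper does this by rotating both factors simultaneously via trivializations $\alpha_{\theta},\beta_{\theta}$ with $f_{j}\mapsto e^{i\theta}f_{j}$, checking that these descend to the quotient $\pi(E)$ to give an honest fibration $\tilde f$ whose monodromy is literally $\alpha_{2\pi}*\beta_{2\pi}$, and then invoking Dold's theorem to obtain a fiber homotopy equivalence between $\tilde f$ and $f$. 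Your two-stage loop-lifting sketch would still need such a fiber-homotopy-equivalence argument (and a verification that the transports are well defined near the degenerate ends, where one factor collapses), so the monodromy half is not yet proved as stated.
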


Moreover, we assume that $f_{1}, f_{2}$ and $f$ satisfy
the following condition: 
\begin{enumerate}
\setcounter{enumi}{2}
\item
there exists a positive real number $r'$ such that 
\[
P/\lvert P\rvert : \partial B^{N}_{r} \setminus K_{P} \rightarrow S^{p-1}
\]
is a locally trivial fibration and this fibration is isomorphic to the tubular Milnor fibration of $P$, 
where $K_P = \partial B^{N}_{r} \cap P^{-1}(0)$ and $0 < r \leq r'$ for 
$(P, N) = (f_{1}, n), (f_{2}, m), (f, n+m)$. 
\end{enumerate}
The fibration in (iii) is called \textit{the spherical Milnor fibration of $P$}. 
By using Theorem $2$ and the condition (iii), we have 
\begin{corollary}
Let $f_{1} : (\Bbb{R}^{n}, \mathbf{0}_{n}) \rightarrow (\Bbb{R}^{p}, \mathbf{0}_{p})$ 
and $f_{2} : (\Bbb{R}^{m}, \mathbf{0}_{m}) \rightarrow (\Bbb{R}^{p}, \mathbf{0}_{p})$ 
be analytic germs in Theorem $2$. 
Assume that $f_{1}, f_{2}$ and 
$f = f_{1} + f_{2}$ satisfy the condition (iii). 
Then the fiber of the spherical Milnor fibration of $f$ is 
homotopy equivalent to the join of 
the fibers of the spherical Milnor fibrations of $f_{1}$ and $f_{2}$. 
\end{corollary}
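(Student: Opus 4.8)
The plan is to obtain the corollary as a formal consequence of Theorem 2 together with the hypothesis (iii), so little is needed beyond bookkeeping. For each of the three germs $(P,N) = (f_1,n),(f_2,m),(f,n+m)$, write $F^{t}_{P}$ for the fiber of the tubular Milnor fibration of $P$ and $F^{s}_{P}$ for the fiber of the spherical Milnor fibration $P/\lvert P\rvert : \partial B^{N}_{r}\setminus K_{P} \rightarrow S^{p-1}$. Condition (iii) asserts that these two fibrations are isomorphic; in particular their fibers are homeomorphic, so $F^{s}_{P} \approx F^{t}_{P}$ for each of $P = f_1, f_2, f$. Moreover the germs $f_1, f_2$ here are exactly those of Theorem 2, so hypotheses (i) and (ii) hold and Theorem 2 is available to them.

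Next I would record the standard fact that the join is invariant under homotopy equivalence: if $A \simeq A'$ and $B \simeq B'$ then $A * B \simeq A' * B'$. This follows, for example, by realizing $A * B$ as the double mapping cylinder (homotopy pushout) of the two projections $A \leftarrow A \times B \rightarrow B$, since homotopy pushouts preserve homotopy equivalences and all the spaces in question have the homotopy type of a finite CW complex. Applying this to the homeomorphisms $F^{s}_{f_1} \approx F^{t}_{f_1}$ and $F^{s}_{f_2} \approx F^{t}_{f_2}$ from the first step yields $F^{t}_{f_1} * F^{t}_{f_2} \simeq F^{s}_{f_1} * F^{s}_{f_2}$.

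Finally, Theorem 2 gives $F^{t}_{f} \simeq F^{t}_{f_1} * F^{t}_{f_2}$, and combining this with the homeomorphism $F^{s}_{f} \approx F^{t}_{f}$ produces the chain
\[
F^{s}_{f} \;\approx\; F^{t}_{f} \;\simeq\; F^{t}_{f_1} * F^{t}_{f_2} \;\simeq\; F^{s}_{f_1} * F^{s}_{f_2},
\]
which is precisely the statement of the corollary. There is no genuine obstacle here; the only slightly delicate point is the homotopy invariance of the join invoked above. Note also that, since the corollary concerns only the homotopy type of the fiber and not the monodromy, the restriction $p = 2$ appearing in the monodromy part of Theorem 2 plays no role.
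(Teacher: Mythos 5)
Your proposal is correct and follows essentially the same route as the paper: condition (iii) identifies the spherical and tubular fibers for each of $f_1$, $f_2$, $f$, and Theorem 2 then gives the join statement for the tubular fibers, which transfers to the spherical ones. The only cosmetic difference is that you invoke homotopy invariance of the join, whereas the paper simply notes that the tubular fibers are diffeomorphic to the spherical ones (so the joins are homeomorphic), which makes that step immediate.
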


If $p$ is equal to $2$, analytic germs which satisfy the above conditions were studied by Oka. 
Let $(\rho_{1}, \rho_{2}) : (\Bbb{R}^{2n}, \mathbf{0}_{2n}) \rightarrow (\Bbb{R}^{2}, \mathbf{0}_{2})$ 
be an analytic map germ 
with real $2n$-variables $x_{1}, \dots, x_{n}$ and $y_{1}, \dots, y_{n}$. 
Then $(\rho_{1}, \rho_{2})$ is represented by 
a function of variables $\zz = (z_{1}, \dots, z_{n})$ and $\bar{\zz} = (\bar{z}_{1}, \dots, \bar{z}_{n})$ as 
\[ 
P(\zz, \bar{\zz}) := 
\rho_{1}\Bigl(\frac{\zz + \bar{\zz}}{2}, \frac{\zz - \bar{\zz}}{2\sqrt{-1}}\Bigr) 
+\sqrt{-1}\rho_{2}\Bigl(\frac{\zz + \bar{\zz}}{2}, \frac{\zz - \bar{\zz}}{2\sqrt{-1}}\Bigr). 
\]
Here any complex variable $z_{j}$ of $\Bbb{C}^{n}$ is represented by 
$x_{j} + \sqrt{-1}y_{j}$ and $\bar{z}_{j}$ is the complex conjugate of $z_j$ for $j = 1, \dots, n$. 
Then a map $P: (\Bbb{C}^{n}, O_{n}) \rightarrow (\Bbb{C}, 0)$ is called a \textit{mixed function map}. 
For mixed weighted homogeneous singularities, 
Join theorem is proved 
by J. L. Cisneros-Molina~\cite{C}. 
Oka introduced the notion of Newton boundaries of mixed functions and the concept of strongly non-degeneracy. 
If $P$ is a convenient strongly non-degenerate mixed function or 
a strongly non-degenerate mixed function which 
is locally tame along vanishing coordinate subspaces, then $P$ satisfies the conditions (i), (ii) and (iii). 
See \cite{O1, O2, EO}.

We study the topology of Milnor fibrations of join type. 
If a mixed function $P$ satisfies the condition (iii) and the origin is an isolated singularity of $P$, 
the Seifert form is determined by the spherical Milnor fibration of $P$. 
Note that the Seifert form is a topological invariant of fibrations. 
Then we calculate Seifert forms defined by joins of Milnor fibrations of $1$-variable mixed functions. 
This is a generalization of \cite[Corollary 3]{S1}.

We also study homotopy types of fibered links defined by isolated singularities of join type. 
In \cite{NR1, NR2, NR3}, W. Neumann and L. Rudolph defined 
the enhanced Milnor number and the enhancement to the Milnor number 
of a fibered link. 
These are invariants of homotopy types of fibered links in $S^{2k+1}$. 
If $k=1$, for any $d \in \Bbb{Z}$, there exists a mixed polynomial $P$ such that 
the enhancement to the Milnor number of $K_P$ is equal to $d$ \cite{In1}. 
If $k$ is greater than $1$, 
the enhanced Milnor number is represented by $((-1)^{k+1}\ell, r)$, 
where $\ell \in \Bbb{N}$ and $r \in \{0, 1\}$. 
Note that there exists a complex polynomial $Q$ such that 
the enhanced Milnor number determined by the Milnor fibration of $Q$ is equal to 
$((-1)^{k+1}\ell, 0)$ for $\ell \in \Bbb{N}$ and $k \geq 2$. 
We show that there exists a mixed polynomial of join type such that 
the enhanced Milnor number of a link defined by a mixed polynomial 
is equal to 
$((-1)^{k+1}\ell, 1)$ for $\ell \in \Bbb{N}$ and $k \geq 2$.

This paper is organized as follows. In Section $2$ 
we give some Join type statements, the definition of zeta functions of monodromies and 
strongly non-degenerate mixed functions. 
In Section $3$ we prove Theorem $1$ and Corollary $1$. 
In Section $4$ we consider Join theorem of 
Seifert forms of links defined by $1$-variable mixed polynomials. 
In Section $5$ we study homotopy types of Milnor fibrations defined by 
mixed polynomial of Join type.

The author would like to thank  
Masaharu Ishikawa, Mutsuo Oka and Mihai Tib\u{a}r for precious comments and fruitful suggestions. 

\section{Preliminaries}
\subsection{Join type statements}
Let $f : (\Bbb{R}^{N}, \mathbf{0}_{N}) \rightarrow (\Bbb{R}^{p}, \mathbf{0}_{p})$ 
be an analytic germ which satisfies the conditions (i) and (ii). 
By using the same argument in \cite[Proposition 11]{O2}, we can show the following lemma. 
\begin{lemma}
Assume that an analytic germ $f : (\Bbb{R}^{N}, \mathbf{0}_{N}) \rightarrow (\Bbb{R}^{p}, \mathbf{0}_{p})$ 
satisfies the conditions (i) and (ii). 
There exists a sufficiently small positive real number $r_{0}$ which satisfies the following: 
For any positive real number $r_{1}$ which satisfies $r_{1} \leq r_{0}$, 
there exists a positive real number $\tilde{\delta}$ such that $f^{-1}(\eta)$ intersects transversely with the sphere 
$S^{N-1}_{r}$ for $r_{1} \leq r \leq r_{0}$ and $0 < \| \eta\| \leq \tilde{\delta}$. 
\end{lemma}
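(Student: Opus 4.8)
The plan is to adapt the transversality argument of \cite[Proposition 11]{O2} to the real analytic setting under conditions (i) and (ii). First I would set up the relevant "polar" or "distance" function along the fibers of $f$. Fix $\varepsilon_0$ so that the conclusions stated before the lemma hold: $f : B^N_{\varepsilon_0}\cap f^{-1}(D^p_{\delta}\setminus\{\mathbf 0_p\})\to D^p_{\delta}\setminus\{\mathbf 0_p\}$ is a locally trivial fibration, $B^N_{\varepsilon_0}\setminus f^{-1}(0)$ has no critical point of $f$, and the sphere $S^{N-1}_{\varepsilon}$ meets every stratum $M\in\mathcal S$ transversely for $0<\varepsilon\le\varepsilon_0$. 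I would consider the function $\varphi = \|\xx\|^2$ restricted to the nonsingular fiber $f^{-1}(\eta)$ for $\eta\neq 0$, and study the set $\Sigma$ of points where $f^{-1}(f(\xx))$ fails to meet $S^{N-1}_{\|\xx\|}$ transversely, i.e.\ where the gradient of $\varphi|_{f^{-1}(f(\xx))}$ vanishes. The claim is precisely that $\Sigma$, together with $f^{-1}(0)$, does not accumulate to $\mathbf 0_N$ through points with nonzero $f$-value at a "bad" radius — more precisely, that for each small $r_1$ one can choose $\tilde\delta$ small enough to avoid $\Sigma$ in the shell $r_1\le\|\xx\|\le r_0$, $0<\|\eta\|\le\tilde\delta$.

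The key step is a \emph{curve selection / contradiction} argument. Suppose the conclusion fails. Then for $r_0$ as above there is an $r_1\le r_0$ such that no $\tilde\delta$ works; hence there is a sequence $p_\nu\in B^N_{\varepsilon_0}\setminus f^{-1}(0)$ with $r_1\le\|p_\nu\|\le r_0$, $f(p_\nu)\to\mathbf 0_p$, and $f^{-1}(f(p_\nu))$ non-transverse to $S^{N-1}_{\|p_\nu\|}$ at $p_\nu$. Passing to a subsequence, $p_\nu\to p_\infty$ with $r_1\le\|p_\infty\|\le r_0$, and by continuity $f(p_\infty)=\mathbf 0_p$, so $p_\infty$ lies in some stratum $M\in\mathcal S$. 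Passing to a further subsequence, $T_{p_\nu}f^{-1}(f(p_\nu))\to\tau$ in the appropriate Grassmannian. Non-transversality to the sphere means $T_{p_\nu}f^{-1}(f(p_\nu))$ is contained in (or at least fails to span with) the tangent space $T_{p_\nu}S^{N-1}_{\|p_\nu\|} = p_\nu^{\perp}$; taking limits, $\tau \subset p_\infty^{\perp}$, i.e.\ $\tau$ is non-transverse to $S^{N-1}_{\|p_\infty\|}$ at $p_\infty$. On the other hand, the $a_f$-condition (ii) gives $T_{p_\infty}M\subset\tau$. Combining, $T_{p_\infty}M\subset\tau\subset p_\infty^\perp$, so $M$ is non-transverse to $S^{N-1}_{\|p_\infty\|}$ at $p_\infty$ — contradicting the transversality of spheres to strata for radii $\le\varepsilon_0$. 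Hence such a sequence cannot exist, proving the lemma with, say, $r_0=\varepsilon_0$.

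One technical point I would want to nail down carefully is the precise linear-algebra statement of "non-transversality" in the limit: the fiber $f^{-1}(\eta)$ has dimension $N-p$, the sphere has dimension $N-1$, and transversality in $\mathbb R^N$ means $\tau + p_\infty^\perp = \mathbb R^N$, equivalently $p_\infty\notin\tau$ (since $\tau$ has codimension $p\ge 2$, transversality is equivalent to $\tau\not\subset p_\infty^\perp$). So I would phrase the bad set as $\{\xx : \xx\in T_{\xx}f^{-1}(f(\xx))\}$, verify this is a closed condition stable under the Grassmannian limit, and check the limit computation $\langle p_\nu\rangle\subset T_{p_\nu}f^{-1}(f(p_\nu)) \Rightarrow \langle p_\infty\rangle\subset\tau$. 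The main obstacle is ensuring the Whitney $(a)$-regularity plus the sphere-stratum transversality genuinely rules out $T_{p_\infty}M\subset p_\infty^\perp$; this is exactly where the hypothesis that $\varepsilon$ (hence $r_0$) is chosen small enough — following \cite[Lemma 3.2]{BV} — is used, and I would cite it in the form: for small enough radius, every stratum of $f^{-1}(0)$ is transverse to every sphere centered at the origin. Everything else is a standard compactness-and-limits packaging of the $a_f$-condition, mirroring \cite[Proposition 11]{O2}.
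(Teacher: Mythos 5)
Your argument is correct and is essentially the proof the paper has in mind: the paper does not write one out but refers to Oka's Proposition 11 in [O2], which is exactly this compactness-and-limits argument --- choose $r_{0}$ so small that every sphere of radius $\leq r_{0}$ is transverse to all strata of the Whitney stratification of $f^{-1}(0)$ (the [BV]-type choice already made in Section 1), take a sequence of non-transverse points in the shell with $f$-values tending to $\mathbf{0}_{p}$, pass to a limit point $p_{\infty}$ in a stratum $M$ and a limit plane $\tau$ of the fiber tangent spaces, and use the $a_{f}$-condition $T_{p_{\infty}}M \subset \tau \subset p_{\infty}^{\perp}$ to contradict the sphere--stratum transversality. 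One correction to your last paragraph: non-transversality of $f^{-1}(f(\xx))$ to the sphere through $\xx$ is the condition $T_{\xx}f^{-1}(f(\xx)) \subset \xx^{\perp}$ (equivalently, $\xx$ is orthogonal to the fiber, i.e.\ a critical point of $\|\cdot\|^{2}$ restricted to the fiber), not $\xx \in T_{\xx}f^{-1}(f(\xx))$ --- the latter in fact forces transversality; your second paragraph uses the correct closed condition, which does pass to the Grassmannian limit as you claim, so the slip does not affect the argument.
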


Let $f_{1} : (\Bbb{R}^{n}, \mathbf{0}_{n}) \rightarrow (\Bbb{R}^{p}, \mathbf{0}_{p})$ 
and $f_{2} : (\Bbb{R}^{m}, \mathbf{0}_{m}) \rightarrow (\Bbb{R}^{p}, \mathbf{0}_{p})$ 
be analytic germs which satisfy the conditions (i) and (ii), where $n, m \geq p \geq 2$. 
Let $f : (\Bbb{R}^{n} \times \Bbb{R}^{m}, \mathbf{0}_{n+m}) \rightarrow (\Bbb{R}^{p}, \mathbf{0}_{p})$ 
be the analytic germ 
defined by $f = f_{1} + f_{2}$. Put $V(f) = f^{-1}(0) \cap (U_{1} \times U_{2})$. 
We take the stratification $\mathcal{S}$ of $V(f)$ as follows: 
\[
\mathcal{S} : 
(\mathcal{S}_{1} \times \mathcal{S}_{2}) \sqcup 
\bigl( V(f) \setminus (V(f_{1}) \times V(f_{2})) \bigr), 
\]
where $\mathcal{S}_{j}$ is a stratification of $V(f_{j})$ in Section~$1$ for $j = 1, 2$. 
By \cite{GWPL}, we may assume that $\mathcal{S}_{1} \times \mathcal{S}_{2}$ is Whitney $(a)$-regular. 
By using the stratification $\mathcal{S}$ of $V(f)$, 
R. N. Ara\'{u}jo dos Santos, Y. Chen and M. Tib\u{a}r showed the following lemma.  

\begin{lemma}[{\cite[Proposition 5.2]{ACT}}]
Let $f_{1} : (\Bbb{R}^{n}, \mathbf{0}_{n}) \rightarrow (\Bbb{R}^{p}, \mathbf{0}_{p})$ 
and $f_{2} : (\Bbb{R}^{m}, \mathbf{0}_{m}) \rightarrow (\Bbb{R}^{p}, \mathbf{0}_{p})$ 
be analytic germs which satisfy the conditions (i) and (ii), where $n, m \geq p \geq 2$. 
Then the analytic germ 
$f = f_{1} + f_{2} : (\Bbb{R}^{n} \times \Bbb{R}^{m}, \mathbf{0}_{n+m}) \rightarrow (\Bbb{R}^{p}, \mathbf{0}_{p})$ 
also satisfies the conditions (i) and (ii). 
\end{lemma}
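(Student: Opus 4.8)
The plan is to verify conditions (i) and (ii) for $f$ directly, working in $U_{1}\times U_{2}$ (after shrinking $\varepsilon,\delta$ if necessary) with $\mathcal{S}$ the stratification of $V(f)$ defined above. The elementary fact used throughout is that $Df(x,y)(\dot x,\dot y)=Df_{1}(x)\dot x+Df_{2}(y)\dot y$, so $Df(x,y)$ is onto $\Bbb{R}^{p}$ as soon as one of $Df_{1}(x)$, $Df_{2}(y)$ is. If $(x,y)\notin V(f)$ then $f_{1}(x)+f_{2}(y)\neq\mathbf{0}_{p}$, hence (say) $f_{1}(x)\neq\mathbf{0}_{p}$, so $x\notin V(f_{1})$, and the $a_{f}$-condition for $f_{1}$ forces $x$ to be a regular point of $f_{1}$; thus $Df_{1}(x)$, and with it $Df(x,y)$, is onto. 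Hence $f$ has no critical point on $(U_{1}\times U_{2})\setminus V(f)$, so $\mathbf{0}_{p}$ is the only critical value of $f$ near the origin; this establishes (i) and the no-critical-point requirement in (ii). The same computation shows that on the big stratum $M_{0}:=V(f)\setminus(V(f_{1})\times V(f_{2}))$ neither $f_{1}$ nor $f_{2}$ vanishes, so $f$ is a submersion near every point of $M_{0}$; hence $M_{0}$ is a smooth submanifold coinciding near each of its points with the fibre $f^{-1}(\mathbf{0}_{p})$, and if $p_{\nu}\to p_{\infty}\in M_{0}$ with $T_{p_{\nu}}f^{-1}(f(p_{\nu}))\to\tau$, then $T_{p_{\nu}}f^{-1}(f(p_{\nu}))\to T_{p_{\infty}}f^{-1}(\mathbf{0}_{p})=T_{p_{\infty}}M_{0}$ because $Df$ is continuous and of full rank near $p_{\infty}$, so $\tau=T_{p_{\infty}}M_{0}$ and the Thom-type condition holds along $M_{0}$.

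It remains to verify the Thom-type condition at a product stratum. Let $p_{\nu}=(x_{\nu},y_{\nu})\in(U_{1}\times U_{2})\setminus V(f)$ with $T_{p_{\nu}}f^{-1}(f(p_{\nu}))\to\tau$ and $p_{\nu}\to p_{\infty}\in M_{1}\times M_{2}$, where $M_{j}\in\mathcal{S}_{j}$; write $c_{\nu}=f(p_{\nu})\neq\mathbf{0}_{p}$ and note $f_{1}(x_{\infty})=f_{2}(y_{\infty})=\mathbf{0}_{p}$. Since $c_{\nu}\neq\mathbf{0}_{p}$, at each $\nu$ at most one of $f_{1}(x_{\nu})$, $f_{2}(y_{\nu})$ vanishes, so after passing to a subsequence the vanishing pattern is constant. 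Put $L^{1}_{\nu}=\ker Df_{1}(x_{\nu})$ when $f_{1}(x_{\nu})\neq\mathbf{0}_{p}$ and $L^{1}_{\nu}=T_{x_{\nu}}N^{1}_{\nu}$ when $f_{1}(x_{\nu})=\mathbf{0}_{p}$, where $N^{1}_{\nu}\in\mathcal{S}_{1}$ is the stratum through $x_{\nu}$; define $L^{2}_{\nu}$ symmetrically. Since every stratum of $\mathcal{S}_{j}$ lies in $f_{j}^{-1}(\mathbf{0}_{p})$, in all cases $L^{1}_{\nu}\subseteq\ker Df_{1}(x_{\nu})$ and $L^{2}_{\nu}\subseteq\ker Df_{2}(y_{\nu})$, hence $L^{1}_{\nu}\times L^{2}_{\nu}\subseteq T_{p_{\nu}}f^{-1}(c_{\nu})$. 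After a further subsequence the strata $N^{j}_{\nu}$ are fixed, the dimensions of $L^{1}_{\nu}$ and $L^{2}_{\nu}$ are constant, and $L^{j}_{\nu}\to L^{j}_{\infty}$ in the relevant Grassmannians, so passing to the limit in the last inclusion gives $L^{1}_{\infty}\times L^{2}_{\infty}\subseteq\tau$. Finally $T_{x_{\infty}}M_{1}\subseteq L^{1}_{\infty}$: if $f_{1}(x_{\nu})\neq\mathbf{0}_{p}$ then $x_{\nu}\notin V(f_{1})$, $x_{\nu}\to x_{\infty}\in M_{1}$, and $T_{x_{\nu}}f_{1}^{-1}(f_{1}(x_{\nu}))=L^{1}_{\nu}\to L^{1}_{\infty}$, so this is precisely the $a_{f}$-condition for $f_{1}$; if $f_{1}(x_{\nu})=\mathbf{0}_{p}$ then the $x_{\nu}$ all lie in a fixed stratum $N^{1}\in\mathcal{S}_{1}$ with $x_{\infty}\in M_{1}\cap\overline{N^{1}}$, and Whitney $(a)$-regularity of $\mathcal{S}_{1}$ (trivial when $N^{1}=M_{1}$) gives $T_{x_{\infty}}M_{1}\subseteq\lim T_{x_{\nu}}N^{1}=L^{1}_{\infty}$. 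Symmetrically $T_{y_{\infty}}M_{2}\subseteq L^{2}_{\infty}$, so $T_{p_{\infty}}(M_{1}\times M_{2})=T_{x_{\infty}}M_{1}\times T_{y_{\infty}}M_{2}\subseteq L^{1}_{\infty}\times L^{2}_{\infty}\subseteq\tau$, as required.

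I expect the product-stratum case to be the main obstacle: on a subsequence where, say, $f_{1}(x_{\nu})$ vanishes identically, $x_{\nu}$ lies in $V(f_{1})$, where the $a_{f}$-condition for $f_{1}$ gives no information and one must switch to the Whitney $(a)$-regularity of $\mathcal{S}_{1}$; arranging the successive passages to subsequences so that all strata stabilise and all linear spaces converge in Grassmannians of fixed dimension is the real bookkeeping. Everything else is a direct translation of the hypotheses on $f_{1}$ and $f_{2}$.
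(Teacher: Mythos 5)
Your argument is correct, and it is worth noting that the paper itself does not prove this lemma at all: it is quoted verbatim from Ara\'ujo dos Santos--Chen--Tib\u{a}r \cite[Proposition 5.2]{ACT}, so what you have written is a self-contained substitute for that citation rather than a variant of an in-paper proof. Your route is the natural one and matches the standard argument: off $V(f)$ at least one of $f_{1},f_{2}$ is nonzero, so the no-critical-point half of the $a_{f}$-condition for the factors makes $Df=Df_{1}\oplus Df_{2}$ surjective, giving (i) and regularity along the open stratum $V(f)\setminus(V(f_{1})\times V(f_{2}))$, where both factors are submersions and the kernels vary continuously. The genuinely delicate point, which you identify correctly, is the Thom condition at a product stratum $M_{1}\times M_{2}$: since $f(p_{\nu})\neq\mathbf{0}_{p}$ at most one factor vanishes along the sequence, and after stabilizing the vanishing pattern and the strata (legitimate, since the stratifications are locally finite) you use the $a_{f}$-condition of the non-vanishing factor and Whitney $(a)$-regularity of $\mathcal{S}_{1}$, $\mathcal{S}_{2}$ for the factor trapped in its zero set; the inclusions $L^{1}_{\nu}\times L^{2}_{\nu}\subseteq\ker Df_{1}(x_{\nu})\times\ker Df_{2}(y_{\nu})\subseteq T_{p_{\nu}}f^{-1}(f(p_{\nu}))$ pass to the limit in the Grassmannians, yielding $T_{p_{\infty}}(M_{1}\times M_{2})\subseteq\tau$. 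Note that the Whitney $(a)$-regularity you invoke is exactly the hypothesis the paper arranges in Section 1 (via Hironaka) before applying the cited proposition, so your use of it is legitimate and, indeed, indispensable; this is also where the proof in \cite{ACT} relies on the regularity of the chosen stratification. I see no gap.
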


By Lemma $1$ and Lemma $2$, we have 
\begin{corollary}
There exists a positive real number $\varepsilon'_{0}$ such that
the restricted map $f|_{B^{n+m}_{\varepsilon'}}: B^{n+m}_{\varepsilon'} \rightarrow \Bbb{R}^{p}$ also satisfies 
the conditions (i) and (ii) for $0 < \varepsilon' \leq \varepsilon'_{0}$. 
Moreover, there exists a positive real number $\delta'$ such that 
\[
f : B^{n+m}_{\varepsilon'} \cap f^{-1}(D^{p}_{\delta'} \setminus \{ \mathbf{0}_{p}\}) 
\rightarrow D^{p}_{\delta'} \setminus \{ \mathbf{0}_{p}\}
\] 
is a locally trivial fibration. 
\end{corollary}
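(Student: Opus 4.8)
The plan is to deduce this corollary directly from Lemma~1 and Lemma~2 together with the Ehresmann fibration theorem; no new geometry is needed, only a careful bookkeeping of the two radii $\varepsilon'$ and $\delta'$.

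First I would record what Lemma~2 supplies: the analytic germ $f = f_1 + f_2$ satisfies the conditions (i) and (ii), the latter with respect to the stratification $\mathcal{S}$ of $V(f)$ constructed above. Since (i) and (ii) are properties of the germ, they already hold on a sufficiently small representative: there is $\varepsilon'_0 > 0$, which I will moreover take to be $\le r_0$ (the constant produced by Lemma~1 applied to $f$), such that $B^{n+m}_{\varepsilon'_0} \setminus f^{-1}(0)$ contains no critical point of $f$, such that $\mathbf{0}_p$ is the only critical value of $f$ on $B^{n+m}_{\varepsilon'_0}$, and such that the $a_f$-condition holds for all sequences lying in $B^{n+m}_{\varepsilon'_0}$. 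Passing from $B^{n+m}_{\varepsilon'_0}$ to a smaller concentric ball only restricts the family of sequences over which the $a_f$-condition is tested, so $f|_{B^{n+m}_{\varepsilon'}}$ inherits (i) and (ii) for every $0 < \varepsilon' \le \varepsilon'_0$; this is the first assertion.

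Next, fix such an $\varepsilon'$. Applying Lemma~1 to $f$ with $r_1 = \varepsilon'$, I obtain $\tilde\delta > 0$ such that $f^{-1}(\eta)$ meets $S^{n+m-1}_r$ transversely for all $r \in [\varepsilon', r_0]$ and all $0 < \|\eta\| \le \tilde\delta$; set $\delta' = \tilde\delta$. Put $W = B^{n+m}_{\varepsilon'} \cap f^{-1}(D^p_{\delta'} \setminus \{\mathbf{0}_p\})$, a compact manifold with boundary $\partial W = \partial B^{n+m}_{\varepsilon'} \cap f^{-1}(D^p_{\delta'} \setminus \{\mathbf{0}_p\})$, the manifold structure and the description of $\partial W$ both coming from the transversality just obtained (note $f^{-1}(D^p_{\delta'} \setminus \{\mathbf{0}_p\})$ avoids $f^{-1}(0)$, so $f$ is a submersion there by (i)--(ii)). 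On the interior of $W$ the map $f$ is a submersion onto $D^p_{\delta'} \setminus \{\mathbf{0}_p\}$ because $f$ has no critical point off $f^{-1}(0)$; on $\partial W$ the map $f$ is a submersion precisely because of the transversality at $r = \varepsilon'$ from Lemma~1; and $f : W \to D^p_{\delta'} \setminus \{\mathbf{0}_p\}$ is proper since $W$ is compact. The Ehresmann fibration theorem for manifolds with boundary~\cite{W} then gives that $f : W \to D^p_{\delta'} \setminus \{\mathbf{0}_p\}$ is a locally trivial fibration, which is the second assertion.

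The only point deserving genuine care, as opposed to a formality, is the claim that the $a_f$-condition persists under shrinking the ball, and that a single radius $\delta'$ can be chosen so as to secure simultaneously both the interior-submersion and the boundary-submersion hypotheses of Ehresmann's theorem. Once the radii are linked in the indicated order — first shrink $\varepsilon'_0$ so that $f|_{B^{n+m}_{\varepsilon'_0}}$ is well behaved and $\varepsilon'_0 \le r_0$, then read off $\delta'$ from Lemma~1 for the chosen $\varepsilon'$ — these hypotheses fall into place and no further estimates are required.
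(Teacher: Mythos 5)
Your proposal follows essentially the same route the paper intends: the paper gives no separate argument for this corollary beyond ``By Lemma $1$ and Lemma $2$'', i.e.\ conditions (i)--(ii) for $f$ come from Lemma $2$, transversality of the fibers $f^{-1}(\eta)$ with the spheres comes from Lemma $1$, and the fibration is then obtained by the same Ehresmann-type argument the paper already used for $f_1$ and $f_2$ in Section $1$. The only blemish is your claim that $W = B^{n+m}_{\varepsilon'} \cap f^{-1}(D^{p}_{\delta'} \setminus \{\mathbf{0}_{p}\})$ is compact (it is not, since $f^{-1}(\mathbf{0}_{p})$ is removed); properness of $f$ on $W$ still holds because the preimage of any compact subset of $D^{p}_{\delta'} \setminus \{\mathbf{0}_{p}\}$ is a closed subset of the compact ball $B^{n+m}_{\varepsilon'}$, so the argument goes through unchanged.
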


\subsection{Divisors and Zeta functions of monodromies}
Take $1$-variable polynomials $q_{1}(t)$ and $q_{2}(t)$ with $q_{1}(0) = q_{2}(0) = 0$. 
Set $q_{1}(t) = \alpha_{0}\prod_{j=1}^{k}(t - \alpha_{i})$ and 
$q_{2}(t) = \beta_{0}\prod_{j=1}^{\ell}(t - \beta_{j})$, 
where $\alpha_{i}, \beta_{j} \in \Bbb{C}^{*} := \Bbb{C} \setminus \{0\}$
for $i = 0, \dots, k$ and $j = 0, \dots, \ell$. 
Then we define the \textit{divisor of $q_{1}(t)/q_{2}(t)$} by 
\[
\biggl(\frac{q_{1}(t)}{q_{2}(t)}\biggr) = \sum_{i=1}^{k}\langle \alpha_{i}\rangle - 
\sum_{j=1}^{\ell}\langle \beta_{j}\rangle
\in \Bbb{Z}(\Bbb{C}^{*}).
\]

Let $F$ be the fiber of the spherical Milnor fibration of 
$P : (\Bbb{R}^{2n}, \mathbf{0}_{2n}) \rightarrow (\Bbb{R}^{2}, \mathbf{0}_{2})$ 
and 
$h : F \rightarrow F$ be the monodromy of this fibration. 
Set $P_{j}(t) = \det (\text{Id} - th_{*, j})$, where 
$h_{*, j} : H_{j}(F, \Bbb{Q}) \rightarrow H_{j}(F, \Bbb{Q})$ is an isomorphism induced by $h$. 
Then the \textit{zeta function $\zeta(t)$ of the monodromy} is defined by 
\[
\zeta(t) = \prod_{j=0}^{2n-2}P_{j}(t)^{(-1)^{j+1}}.
\]
See \cite[Chapter I]{O0}. Assume that $P$ satisfies the following properties: 
\renewcommand{\theenumi}{\alph{enumi}}
\begin{enumerate}
\item
$\mathbf{0}_{2n}$ is an isolated singularity of $P$,
\item
$F$ has a homotopy type of a finite CW-complex of dimension $\leq n-1$,
\item
$F$ is $(n-2)$-connected.
\end{enumerate}
Then the zeta function $\zeta(t)$ is equal to $P_{n-1}(t)^{(-1)^{n}}/(t-1)$ and 
the \textit{reduced zeta function} is defined by $\tilde{\zeta}(t) = (t - 1)\zeta(t)$.

\subsection{Strongly non-degenerate mixed functions}
In this subsection, we introduce a class of mixed functions which admit 
tubular Milnor fibrations and spherical Milnor fibrations 
given by Oka in \cite{O1}. 
Let $P(\zz, \bar{\zz})$ be a mixed function, i.e., 
$P(\zz, \bar{\zz})$ is a function 
expanded in a convergent power series 
of variables $\zz = (z_1, \dots, z_n)$ and $\bar{\zz} = (\bar{z}_1, \dots, \bar{z}_n)$ 
\[
P(\zz, \bar{\zz}) := \sum_{\nu, \mu} c_{\nu, \mu}\zz^{\nu}\bar{\zz}^{\mu}, 
\]
where $\zz^{\nu} = z^{\nu_1}_1 \cdots z^{\nu_n}_n$ for $\nu = (\nu_1, \dots, \nu_n)$ 
(respectively $\bar{\zz}^{\mu} = \bar{z}_{1}^{\mu_1} \cdots \bar{z}_{n}^{\mu_n}$ for $\mu = (\mu_1, \dots, \mu_n))$. 
The \textit{Newton polygon} $\Gamma_{+}(P;\zz.\bar{\zz})$ is defined by the convex hull of 
\[
\bigcup_{(\nu, \mu)}\{(\nu + \mu)+\Bbb{R}^n_{+} \ | 
\ c_{\nu, \mu}\neq0\}, 
\]
where $\nu + \mu$ is the sum of the multi-indices of $\zz^{\nu}\bar{\zz}^{\mu}$, 
i.e., $\nu+\mu = (\nu_{1}+\mu_{1}, \dots, \nu_{n}+\mu_{n})$. 
\textit{The Newton boundary} $\Gamma(P;\zz,\bar{\zz})$ is the union of compact faces  of $\Gamma_+(P;\zz,\bar{\zz})$. 
The \textit{strongly non-degeneracy} is defined from the Newton boundary as follows:  
let $\Delta_1,$ $\dots$ ,$\Delta_m$ be the faces of $\Gamma(P;\zz,\bar{\zz})$. 
For each face $\Delta_{k}$,  
the \textit{face function} 
$P_{\Delta_{k}}(\zz,\bar{\zz})$ is defined by $P_{\Delta_{k}}(\zz,\bar{\zz}) := \sum_{(\nu + \mu) \in \Delta_{k}}
c_{\nu, \mu}\zz^{\nu}\bar{\zz}^{\mu}$. 
If $P_{\Delta_{k}}(\zz,\bar{\zz}) : \Bbb{C}^{\ast n} \rightarrow \Bbb{C}$ 
has no critical point and 
$P_{\Delta_{k}}$ is surjective for $\dim \Delta_{k} \geq 1$, 
we say that $P(\zz, \bar{\zz})$ is \textit{strongly non-degenerate} for $\Delta_{k}$, where 
$\Bbb{C}^{\ast n} = \{\zz = (z_{1}, \dots, z_{n}) \mid z_{j} \neq 0, j = 1, \dots, n \}$. 
If $P(\zz, \bar{\zz})$ is strongly non-degenerate for any $\Delta_{k}$ for $k = 1,\dots,m$,
we say that $P(\zz, \bar{\zz})$ is 
{\textit{strongly non-degenerate}}. 
If $P( (0,\dots,0, z_{j},0,\dots,0), (0,\dots,0, \bar{z}_{j},0,\dots,0)) \not\equiv 0$ 
for each $j= 1,\dots,n$, 
then we say that $P(\zz, \bar{\zz})$ is \textit{convenient}. 
Oka showed that 
a convenient strongly non-degenerate mixed function $P(\zz, \bar{\zz})$ has the Milnor fibration.  

\begin{theorem}[\cite{O1, O2, EO}] 
Let $P(\zz, \bar{\zz}) : (\Bbb{C}^{n}, O_{n}) \rightarrow (\Bbb{C}, 0)$ be a convenient 
strongly non-degenerate mixed function. 
Then 
$O_{n}$ is an isolated singularity of $P$ and 
$P$ satisfies the conditions (i), (ii) and (iii). 
\end{theorem}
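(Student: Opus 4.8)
The plan is to retrace the arguments of Oka \cite{O1, O2} and of Eyral--Oka \cite{EO}, reorganised around the three conditions; throughout, $N = 2n$ and $p = 2$.

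\textbf{Step 1: $O_{n}$ is an isolated singularity, and condition (i).} Recall that $\zz_{0}\in\Bbb{C}^{n}$ is a singular point of the mixed function $P$ exactly when the real gradients $\nabla\,\mathrm{Re}\,P(\zz_{0})$ and $\nabla\,\mathrm{Im}\,P(\zz_{0})$ are linearly dependent over $\Bbb{R}$, equivalently (Oka's criterion, see \cite{O1}) when $\overline{\bar\partial P}(\zz_{0})=\lambda\,\partial P(\zz_{0})$ for some $\lambda\in S^{1}$, where $\partial P=(\partial P/\partial z_{1},\dots,\partial P/\partial z_{n})$ and $\bar\partial P=(\partial P/\partial\bar{z}_{1},\dots,\partial P/\partial\bar{z}_{n})$. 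I would argue by contradiction: were $O_{n}$ not an isolated singular point, the real analytic curve selection lemma would produce a real analytic arc $\zz(t)$, defined for small $t\geq0$, of singular points of $P$ with $\zz(0)=O_{n}$ and $\zz(t)\neq O_{n}$ for $t>0$. Setting $I=\{j:z_{j}\not\equiv0\}$ and writing $z_{j}(t)=a_{j}t^{w_{j}}+(\text{higher order})$ with $a_{j}\neq0$ for $j\in I$, the arc determines a positive weight $(w_{j})_{j\in I}$ and hence a face $\Delta$ of the Newton boundary of $P|_{\Bbb{C}^{I}}$; substituting the arc into the singularity criterion and comparing lowest-order terms in $t$ forces the corresponding face function to acquire a critical point on $\Bbb{C}^{\ast I}$. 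Since $P$ is convenient, the restriction $P|_{\Bbb{C}^{I}}$ is again convenient and strongly non-degenerate (a lemma of Oka, \cite{O1}), so this is a contradiction. Running the same argument with arcs of critical points lying off $P^{-1}(0)$ shows that $B^{2n}_{\varepsilon}\setminus P^{-1}(0)$ has no critical point for $\varepsilon$ small; combined with the isolated singularity, every critical point of $P|_{B^{2n}_{\varepsilon}}$ lies on $P^{-1}(0)$, so $\mathbf{0}_{2}$ is an isolated critical value of $P$, which is condition (i).

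\textbf{Step 2: the $a_{f}$-condition, condition (ii).} Take a Whitney stratification $\mathcal{S}$ of $V(P)=P^{-1}(0)\cap B^{2n}_{\varepsilon}$ compatible with the partition of $V(P)$ into the sets $V(P)\cap\Bbb{C}^{\ast I}$, $I\subseteq\{1,\dots,n\}$; such a stratification exists by \cite{H} and is in particular Whitney $(a)$-regular. To verify the $a_{f}$-condition, suppose $p_{\nu}\in B^{2n}_{\varepsilon}\setminus V(P)$ with $T_{p_{\nu}}P^{-1}(P(p_{\nu}))\to\tau$ and $p_{\nu}\to p_{\infty}\in M\in\mathcal{S}$, and let $I$ be the index set with $p_{\infty}\in\Bbb{C}^{\ast I}$, so $M\subseteq\Bbb{C}^{\ast I}$. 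The space of such configurations (points together with the Grassmannian coordinate recording the fibre tangent plane) is subanalytic, so the curve selection lemma replaces the sequence by a real analytic arc $\zz(t)$, $t\in[0,1)$, with $\zz(0)=p_{\infty}$, $\zz(t)\notin V(P)$ for $t>0$, and $T_{\zz(t)}P^{-1}(P(\zz(t)))\to\tau$. Expanding $z_{j}(t)=a_{j}t^{w_{j}}+(\text{higher order})$ — with $w_{j}=0$, $a_{j}=(p_{\infty})_{j}$ for $j\in I$ and $w_{j}\geq1$ for $j\notin I$ — and letting $\Delta$ be the face of $\Gamma(P;\zz,\bar{\zz})$ selected by this weight, note that $T_{\zz(t)}P^{-1}(P(\zz(t)))$ is the real-orthogonal complement of $\mathrm{span}_{\Bbb{R}}\{\nabla\,\mathrm{Re}\,P(\zz(t)),\nabla\,\mathrm{Im}\,P(\zz(t))\}$, whose leading behaviour in $t$ is controlled by $\partial P_{\Delta}$ and $\overline{\bar\partial P_{\Delta}}$. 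Because $P_{\Delta}$ is a submersion on $\Bbb{C}^{\ast n}$ (strong non-degeneracy, using convenience so that the restricted Newton data is well behaved), this pins down the limit $\tau$ finely enough — it contains the limiting tangent planes to the fibres of the face function together with the directions of the weight action — to conclude, with the help of Whitney $(a)$-regularity of $\mathcal{S}$, that $T_{p_{\infty}}M\subset\tau$. I expect this step to be the main obstacle: since $P$ is not holomorphic one must track both $\partial P$ and $\bar\partial P$ and the exact orders of vanishing in $t$, and the reduction from the sequence to a single arc must be carried out on the correct subanalytic incidence set.

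\textbf{Step 3: condition (iii).} With (i) and (ii) established, Lemma~$1$ supplies a radius $r_{0}$ so that $P^{-1}(\eta)$ meets $S^{2n-1}_{r}$ transversely for $0<r\leq r_{0}$ and $0<\|\eta\|\leq\tilde{\delta}$; in particular $K_{P}=S^{2n-1}_{r}\cap P^{-1}(0)$ is a smooth submanifold of the sphere and $P^{-1}(0)$ is transverse to $S^{2n-1}_{r}$. I would then realise the spherical fibration and its identification with the tubular one by the standard inflation argument (Milnor's comparison of the two models, adapted to mixed functions in \cite{EO}): using this transversality together with the $a_{f}$-condition to control the behaviour near $K_{P}$, one constructs a smooth vector field on the region $\{\zz:\|\zz\|\leq r,\ 0<\|P(\zz)\|\leq\delta\}$ that is tangent to the level sets of $P/|P|$, transverse to the spheres it meets, and outward pointing; its flow carries the tube $B^{2n}_{r}\cap P^{-1}(\partial D^{2}_{\delta})$ diffeomorphically onto the complement in $S^{2n-1}_{r}$ of an open tubular neighbourhood of $K_{P}$, while preserving the value of $P/|P|$. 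This simultaneously shows that $P/|P|:S^{2n-1}_{r}\setminus K_{P}\to S^{1}$ is a locally trivial fibration and that it is fibre-isomorphic to the tubular Milnor fibration of $P$, which is condition (iii), completing the proof.
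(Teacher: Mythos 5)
The paper never proves this statement: it is Theorem 3, imported with the citations \cite{O1, O2, EO}, so the only meaningful comparison is with Oka's and Eyral--Oka's own proofs, whose overall strategy your outline does mirror (curve selection plus face functions for isolatedness, Newton-boundary control of $\partial P$ and $\bar{\partial}P$ along analytic arcs for the Thom condition, and a vector-field transport from the tube to the sphere). Step 1 is an acceptable sketch of Oka's argument, provided you carry the unit $\lambda$ of the criticality criterion along in the curve selection (work in $\Bbb{C}^{n}\times S^{1}$) and compare the orders in $t$ of all components of $\partial P$ and $\overline{\bar{\partial}P}$, including the case where leading coefficients cancel.

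The genuine gaps are in Steps 2 and 3, i.e.\ exactly at the two hardest estimates of the cited papers. In Step 2 the sentence claiming that strong non-degeneracy of $P_{\Delta}$ ``pins down the limit $\tau$ finely enough \dots\ to conclude $T_{p_{\infty}}M\subset\tau$'' is not an argument but a restatement of the theorem of \cite{O2, EO}: what is needed is the explicit order-of-vanishing computation along the arc showing that the normal directions $\partial P(\zz(t))$, $\overline{\bar{\partial}P(\zz(t))}$ are, to leading order, those of the face function $P_{\Delta}$, and hence that $\tau$ contains $T_{p_{\infty}}(V(P)\cap\Bbb{C}^{\ast I})$; moreover your appeal to Whitney $(a)$-regularity of $\mathcal{S}$ cannot do this work, since Whitney $(a)$ compares tangent spaces of pairs of strata of $V(P)$, whereas the $a_{f}$-condition concerns limits of tangent spaces of the nearby fibers $P^{-1}(\eta)$, $\eta\neq\mathbf{0}_{2}$, which are not strata (also, the stratification must be taken compatible with the coordinate tori for the face-function argument to apply, not an arbitrary one from \cite{H}). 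In Step 3, the vector field you invoke (tangent to the levels of $P/|P|$ and with positive radial component) exists at a point only if $P/|P|$ restricted to the sphere through that point is non-critical there; this non-vanishing statement is a separate Newton-boundary estimate in \cite{O1}, proved again by curve selection using strong non-degeneracy and convenience, and it does not follow from Lemma 1, from condition (i), or from transversality of $P^{-1}(\eta)$ to the spheres. Indeed, if condition (iii) followed formally from (i) and (ii) in this way, the paper would not need to impose (iii) as a separate hypothesis in Corollary 1. As written, therefore, the proposal presupposes rather than proves the two key inequalities on which the theorem rests.
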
 

Let $f_t$ be an analytic family of convenient strongly non-degenerate mixed polynomials 
such that the Newton boundary of $f_t$ is constant for $0 \leq t \leq 1$. 
C. Eyral and M. Oka showed that the topological type of $(V(f_{t}), O_{n})$ is constant for any $t$ 
and their tubular Milnor fibrations are equivalent \cite{EO}.

\section{Proof of Theorem $2$}
Assume that $f_{1}$ and $f_{2}$ satisfy the conditions (i) and (ii) in Section $1$. 
Then the proof of Theorem~$2$ is analogous to the holomorphic case \cite{S2}. 
Set $X_{\mathbf{t}} = f_{1}^{-1}(\mathbf{t}) \cap U_{1}, 
Y_{\mathbf{t}} = f_{2}^{-1}(\mathbf{t}) \cap U_{2}$ and 
$Z_{\mathbf{t}} = f^{-1}(\mathbf{t}) \cap (U_{1} \times U_{2})$. 
Take a positive real number $\delta'$ as in Corollary $1$. 
We fix a point $\mathbf{t} \in \Bbb{R}^{p}$ with 
$0 < \| \mathbf{t} \| \ll \delta'$ and define the map 
\begin{center}
\text{$F_{1} : Z_{\mathbf{t}} \rightarrow A$ as $(\xx,\yy) \mapsto f_{1}(\xx)$}, 
\end{center}
where $A = \{ \ww \in \Bbb{R}^{p} \mid \| \mathbf{t} - \ww \| \leq \delta' \}$. 

\begin{lemma}
The restriction map 
$F_{1} : Z_{\mathbf{t}} \setminus F_{1}^{-1}(\{ \mathbf{0}_{p}, \mathbf{t} \}) \rightarrow 
A \setminus \{ \mathbf{0}_{p}, \mathbf{t} \}$ 
is a locally trivial fibration. 
\end{lemma}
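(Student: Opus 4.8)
The plan is to exhibit $F_1$ as a stratified submersion (away from the bad fibers) and apply the Ehresmann–Thom first isotopy lemma. First I would fix $\mathbf{t}$ with $0 < \|\mathbf{t}\| \ll \delta'$ as in Corollary~1, so that $Z_{\mathbf{t}} = f^{-1}(\mathbf{t}) \cap (U_1 \times U_2)$ is a smooth manifold with corners (its smoothness following from the transversality in Lemma~1 together with Lemma~2, which gives that $f$ satisfies (i) and (ii)). On $Z_{\mathbf{t}} \setminus F_1^{-1}(\{\mathbf{0}_p, \mathbf{t}\})$ one has, for $(\xx,\yy)$ in that set, $f_1(\xx) \neq \mathbf{0}_p$ and $f_2(\yy) = \mathbf{t} - f_1(\xx) \neq \mathbf{0}_p$. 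Since $f_1$ and $f_2$ have no critical points off their zero sets (condition after $a_f$), the maps $f_1$ and $f_2$ are submersions near such points, and hence $F_1 = f_1 \circ \mathrm{pr}_1$ restricted to $Z_{\mathbf{t}}$ is a submersion onto $A \setminus \{\mathbf{0}_p, \mathbf{t}\}$: indeed $Z_{\mathbf{t}}$ is cut out inside $f_1^{-1}(\xx\text{-value})\times f_2^{-1}(\yy\text{-value})$-type level sets, and the constraint $f_1(\xx)+f_2(\yy)=\mathbf{t}$ together with $f_2$ being a submersion lets one freely vary $\yy$ to compensate any prescribed variation of $f_1(\xx)$.

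Next I would handle the boundary. The domain $Z_{\mathbf{t}}$ has boundary faces coming from $\partial B^n_\varepsilon \times B^m_\varepsilon$, $B^n_\varepsilon \times \partial B^m_\varepsilon$, $\partial D^p_\delta$-type conditions on $f_1$ and on $f_2$; by the transversality statements in Section~1 (the spheres $\partial B^n_\varepsilon$, $\partial B^m_\varepsilon$ meet the strata $M_1 \in \mathcal{S}_1$, $M_2 \in \mathcal{S}_2$ transversely, and Lemma~1 for the full $f$) each such face is itself a manifold on which $F_1$ restricts to a submersion onto $A \setminus \{\mathbf{0}_p,\mathbf{t}\}$, by the same argument as in the interior. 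Thus $F_1$ is a proper stratified submersion of the manifold-with-corners $Z_{\mathbf{t}} \setminus F_1^{-1}(\{\mathbf{0}_p,\mathbf{t}\})$ onto $A \setminus \{\mathbf{0}_p,\mathbf{t}\}$; properness holds because $Z_{\mathbf{t}}$ is compact and $F_1^{-1}(\{\mathbf{0}_p,\mathbf{t}\})$ is closed. Ehresmann's fibration theorem (for manifolds with corners, as in \cite{W}) then yields that $F_1$ is a locally trivial fibration.

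The main obstacle I anticipate is the corner/boundary analysis: one must check that the relevant spheres and tube boundaries meet $Z_{\mathbf{t}}$ — and each intersection of boundary faces — transversely, uniformly for the chosen radii, so that $Z_{\mathbf{t}}$ is genuinely a manifold with corners and $F_1$ restricts to a submersion on every boundary stratum. This is exactly where the hypotheses (i), (ii), the Whitney $(a)$-regularity of $\mathcal{S}_1 \times \mathcal{S}_2$ and of $\mathcal{S}$, and Lemma~1 and Lemma~2 (via \cite{ACT}) enter: they guarantee that for $\varepsilon$ small and $\delta$, $\|\mathbf{t}\|$ correspondingly small, all these transversality conditions hold simultaneously. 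Once that package is in place the conclusion is a routine application of the Ehresmann–Thom machinery, and the argument parallels the holomorphic case treated by Sakamoto in \cite{S2}.
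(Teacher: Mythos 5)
Your overall strategy (exhibit $F_1$ as a proper stratified submersion of a manifold with corners and invoke Ehresmann--Thom) is not the paper's route, and as written it has a genuine gap precisely at its decisive step. The claim that on each boundary face of $Z_{\mathbf{t}}$ the map $F_1$ restricts to a submersion onto $A \setminus \{\mathbf{0}_{p}, \mathbf{t}\}$ ``by the same argument as in the interior'' is the whole technical content of your argument, and you then openly defer it as ``the main obstacle''; moreover, for the tube faces it is false as stated: on $\{\|f_{1}(\xx)\| = \delta\}$ (resp.\ $\{\|f_{2}(\yy)\| = \delta\}$) one has $F_{1} = f_{1}$ taking values in the sphere $\partial D^{p}_{\delta}$ (resp.\ in $\mathbf{t} + \partial D^{p}_{\delta}$), a codimension-one subset of $\Bbb{R}^{p}$, so $F_1$ cannot be a submersion onto an open piece of the target there; those faces sit over the boundary of the image of $F_1$ and require a treatment different from the interior case. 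For the sphere faces $\partial B^{n}_{\varepsilon} \times U_{2}$, etc., the needed submersivity does follow from Lemma 1 applied to $f_1$ and $f_2$ (transversality of the fibers with the spheres is equivalent to submersivity of the restriction to the spheres), but you do not carry this out either. So the corner analysis, which is exactly what makes or breaks your approach, is missing.

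The paper sidesteps all of this with a much more elementary argument that you never use: the fiber of $F_1$ over $\ww'$ is the product $X_{\ww'} \times Y_{\mathbf{t} - \ww'}$, and since $f_{j} : U_{j} \setminus V(f_{j}) \rightarrow D^{p}_{\delta} \setminus \{\mathbf{0}_{p}\}$ is already known to be a locally trivial fibration (conditions (i), (ii) and the Ehresmann setup of Section 1), one takes local trivializations $\phi_1$ of $f_1$ over a small $V_{\ww}$ and $\phi_2$ of $f_2$ over $V_{\mathbf{t}-\ww}$ and checks that $(\ww', \xx, \yy) \mapsto (\phi_{1}(\ww', \xx), \phi_{2}(\ww', \yy))$ is a homeomorphism $V_{\ww} \times (X_{\ww} \times Y_{\mathbf{t}-\ww}) \rightarrow F_{1}^{-1}(V_{\ww})$ over $V_{\ww}$; in other words, $F_1$ is the fiber product of the two given fibrations along $\ww' \mapsto (\ww', \mathbf{t}-\ww')$, hence locally trivial. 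All boundary and transversality control is thereby absorbed into the already-established local triviality of the Milnor fibrations of $f_1$ and $f_2$, whereas your proposal uses those fibrations only through the submersivity of $f_1, f_2$ and therefore must re-establish, from scratch, exactly the boundary package it postpones.
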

\begin{proof}
From the tubular Milnor fibrations of $f_1$ and $f_2$, for each $\ww \in A \setminus \{ \mathbf{0}_{p}, \mathbf{t} \}$, 
we may find a neighborhood $V_{\ww} \subset A \setminus \{ \mathbf{0}_{p}, \mathbf{t} \}$ of $\ww$ such that 
there exist local trivializations 
\[
\phi_{1} : V_{\ww} \times X_{\ww} \overset{\cong}{\longrightarrow} f_{1}^{-1}(V_{\ww}) \cap U_{1}, \ \
\phi_{2} : V_{\mathbf{t} - \ww} \times Y_{\mathbf{t} - \ww} \overset{\cong}{\longrightarrow} 
f_{2}^{-1}(V_{\mathbf{t} - \ww}) \cap U_{2},
\]
where $V_{\mathbf{t} - \ww} = \{ \mathbf{t} - \ww \mid \ww \in V_{\ww} \} \subset A \setminus \{ \mathbf{0}_{p}, \mathbf{t} \}$. 
We define the map on $V_{\ww} \times F_{1}^{-1}(\ww) = 
V_{\ww} \times (X_{\ww} \times Y_{\mathbf{t} - \ww})$ as follows: 
\[
\psi : V_{\ww} \times (X_{\ww} \times Y_{\mathbf{t} - \ww}) \rightarrow F_{1}^{-1}(V_{\ww}), \ \ 
(\ww', \xx, \yy) \mapsto (\phi_{1}(\ww', \xx), \phi_{2}(\ww', \yy)). 
\]
Since $\phi_{1}$ and $\phi_{2}$ are local trivializations, $\psi$ is a continuous map. 
For any $(\xx', \yy') \in F_{1}^{-1}(V_{\ww})$, we put 
$(\ww', \xx) = \phi_{1}^{-1}(\xx')$ and $(\mathbf{t} - \ww', \yy) = \phi_{2}^{-1}(\yy')$. 
Then $\psi^{-1}(\xx', \yy')$ is equal to $(\ww', \xx, \yy)$. 
The map $\psi^{-1}$ is a continuous map. 
Thus $\psi$ is a homeomorphism. This shows the local triviality of~$F_{1}$. 
\end{proof}

\begin{lemma}
Let $J$ be the line segment with endpoints $\mathbf{0}_{p}$ and $\mathbf{t}$. 
The inclusion $F_{1}^{-1}(J) \hookrightarrow Z_{\mathbf{t}}$ is a homotopy equivalence. 
\end{lemma}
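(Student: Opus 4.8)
The plan is to deformation-retract $Z_{\mathbf{t}}$ onto the subspace $F_1^{-1}(J)$ by sliding the $f_1$-value along a contraction of $A$ onto the segment $J$. First I would fix a continuous retraction $\pi \colon A \to J$ together with a deformation $H \colon A \times [0,1] \to A$ with $H_0 = \mathrm{id}_A$, $H_1 = \pi$, $H_s$ fixing $\mathbf{0}_p$ and $\mathbf{t}$ for all $s$, and $H_s(A \setminus \{\mathbf{0}_p, \mathbf{t}\}) \subset A \setminus \{\mathbf{0}_p, \mathbf{t}\}$; such an $H$ exists because $A$ is a $p$-ball and $J$ a diameter. The goal is then to lift $H$ through $F_1$ to an isotopy of $Z_{\mathbf{t}}$. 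Over $A \setminus \{\mathbf{0}_p, \mathbf{t}\}$ this is exactly the covering-homotopy property of the locally trivial fibration $F_1$ from Lemma~3, so one obtains a homotopy $\tilde H \colon (Z_{\mathbf{t}} \setminus F_1^{-1}(\{\mathbf{0}_p,\mathbf{t}\})) \times [0,1] \to Z_{\mathbf{t}} \setminus F_1^{-1}(\{\mathbf{0}_p,\mathbf{t}\})$ starting at the identity and covering $H$; its time-$1$ map lands in $F_1^{-1}(J \setminus \{\mathbf{0}_p,\mathbf{t}\})$.

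The main obstacle is the behavior over the two bad values $\mathbf{0}_p$ and $\mathbf{t}$, where $F_1$ is not a fibration (the fibers $F_1^{-1}(\mathbf{0}_p) = V(f_1) \times Y_{\mathbf{t}}$ and $F_1^{-1}(\mathbf{t}) = X_{\mathbf{t}} \times V(f_2)$ can jump), so the fiberwise isotopy must be shown to extend continuously across $F_1^{-1}(\{\mathbf{0}_p,\mathbf{t}\})$. Here I would exploit that $H_s$ fixes $\mathbf{0}_p$ and $\mathbf{t}$: the covering homotopy can be chosen to be the identity on $F_1^{-1}(\{\mathbf{0}_p,\mathbf{t}\})$, and then one argues continuity of $\tilde H$ up to these fibers by a collar/controlled-tube argument near $V(f_1)$ and $V(f_2)$ — precisely this is where the $a_f$-condition (condition (ii)) and the transversality of small spheres with the strata enter, guaranteeing that the tubular-fibration trivializations $\phi_1, \phi_2$ extend in a controlled way over a neighborhood of the critical values. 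This is the standard mechanism behind the holomorphic Join theorem of Sakamoto~\cite{S2}, and the real-analytic version goes through once one knows $f_1, f_2$ satisfy (i), (ii), which is hypothesis here.

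Concretely, the steps in order are: (1) produce $H$ on $A$ as above; (2) apply the covering-homotopy theorem to the fibration $F_1$ of Lemma~3 over $A \setminus \{\mathbf{0}_p, \mathbf{t}\}$, arranging the lift to be stationary on a neighborhood of $F_1^{-1}(\{\mathbf{0}_p,\mathbf{t}\})$ by cutting $H_s$ off to the identity near those values; (3) check that the resulting $\tilde H$ extends by the identity over $F_1^{-1}(\{\mathbf{0}_p,\mathbf{t}\})$ and is continuous there, using the local product structure of $Z_{\mathbf{t}}$ near $V(f_1) \times Y$ and $X \times V(f_2)$ coming from the tubular Milnor fibrations of $f_1$ and $f_2$; (4) conclude that $\tilde H_1 \colon Z_{\mathbf{t}} \to F_1^{-1}(J)$ is a deformation retraction, hence the inclusion $F_1^{-1}(J) \hookrightarrow Z_{\mathbf{t}}$ is a homotopy equivalence. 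I expect step (3) — the continuity of the retraction across the two singular fibers — to be the real content, everything else being formal.
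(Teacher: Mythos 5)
Your outline founders at exactly the point you identify as ``the real content,'' and the two halves of your plan are in tension with each other. If, as in steps (2)--(3), you cut $H_s$ off to the identity near the two bad values, then $H_1$ no longer carries $A$ onto $J$, so the lifted time-$1$ map only moves $Z_{\mathbf{t}}$ into $F_1^{-1}(N)$ for some neighborhood $N$ of $J$ in $A$; it is not a retraction onto $F_1^{-1}(J)$, and you are left with the original problem of comparing $F_1^{-1}(N)$ with $F_1^{-1}(J)$. If instead you do not cut off and try to lift the full retraction of $A$ onto $J$, the covering homotopy property gives you no control whatsoever on the lift as you approach $F_1^{-1}(\mathbf{0}_p)=V(f_1)\times Y_{\mathbf{t}}$ and $F_1^{-1}(\mathbf{t})=X_{\mathbf{t}}\times V(f_2)$ (lifts are highly non-unique, and $F_1$ is not a fibration over those two values), so ``extend by the identity and check continuity'' is not a routine verification. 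Moreover, condition (ii) is an $a_f$-condition on limits of tangent spaces of the fibers of $f_1$, $f_2$, $f$ along their zero sets; it does not by itself hand you controlled trivializations of the auxiliary map $F_1$ on $Z_{\mathbf{t}}$ near the degenerate fibers. To make that route honest you would need something like a Thom stratification adapted to $F_1$ together with the second isotopy lemma, none of which is set up in your argument.

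The paper closes this gap by a different mechanism, and that is the idea missing from your proposal: $Z_{\mathbf{t}}$ is a compact semi-analytic set, so by Lojasiewicz \cite{Lo} it admits a triangulation in which $F_1^{-1}(J)$ is a subcomplex. The local triviality of $F_1$ away from $\{\mathbf{0}_p,\mathbf{t}\}$, combined with a partition of unity and compactness, is then used only to deform $Z_{\mathbf{t}}$ into a regular neighborhood of $F_1^{-1}(J)$ --- which is essentially what your cut-off lift actually produces --- and regular neighborhood theory \cite[Chapter 3]{RS} identifies that neighborhood with a collapsible thickening of the subcomplex $F_1^{-1}(J)$, giving the homotopy equivalence without ever constructing a deformation retraction across the two singular fibers. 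In short: your first half (lifting a cut-off homotopy over the locus where $F_1$ is a fibration) is sound and parallels the paper, but the passage from ``deformed into a small neighborhood of $F_1^{-1}(J)$'' to ``homotopy equivalent to $F_1^{-1}(J)$'' requires the triangulation/regular-neighborhood input (or some equally substantive substitute), and your steps (3)--(4) as written do not supply it.
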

\begin{proof}
Since $Z_{\mathbf{t}}$ is semi-analytic, 
there is a triangulation of $Z_{\mathbf{t}}$ such that 
$F_{1}^{-1}(J)$ is a subcomplex~\cite{Lo}. 
Since $Z_{\mathbf{t}}$ is compact, by using the local triviality of $F_1$ and the partition of unity, 
$Z_{\mathbf{t}}$ is deformed into a regular neighborhood of $F_{1}^{-1}(J)$. 
Thus $F_{1}^{-1}(J)$ and $Z_{\mathbf{t}}$ are homotopy equivalent. 
See \cite[Chapter 3]{RS}. 
\end{proof}


Let $\pi : U_{1} \times U_{2} \rightarrow (U_{1}/V(f_{1})) \times (U_{2}/V(f_{2}))$ be the identification map. 
\begin{lemma}
The identification map $\pi : F_{1}^{-1}(J) \rightarrow \pi(F_{1}^{-1}(J))$ is a homotopy equivalence. 
\end{lemma}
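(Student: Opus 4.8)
The goal is to show that the identification map $\pi$, restricted to $F_1^{-1}(J)$, does not change the homotopy type. The idea is to understand $F_1^{-1}(J)$ fiberwise over the segment $J$ and to see exactly what $\pi$ collapses. For $\ww \in J \setminus \{\mathbf{0}_p, \mathbf{t}\}$ the fiber $F_1^{-1}(\ww) = X_{\ww} \times Y_{\mathbf{t}-\ww}$ meets neither $V(f_1) \times U_2$ nor $U_1 \times V(f_2)$, so $\pi$ is injective there; the collapsing happens only over the two endpoints, where $F_1^{-1}(\mathbf{0}_p) = V(f_1) \times Y_{\mathbf{t}}$ gets crushed in its first factor to the cone point $\ast_1 = V(f_1)/V(f_1)$, and $F_1^{-1}(\mathbf{t}) = X_{\mathbf{t}} \times V(f_2)$ gets crushed in its second factor to $\ast_2$.

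First I would fix, via Lemma~4, local trivializations of the tubular Milnor fibrations of $f_1$ and $f_2$ over a collar of each endpoint of $J$, so that near $\mathbf{0}_p$ the set $F_1^{-1}(J)$ looks like $[0,\epsilon) \times X_{\ww_0} \times (V(f_1) \times Y_{\mathbf{t}})$-type mapping-cylinder data and similarly near $\mathbf{t}$; by compactness of the fibers (as in Lemma~5) and Whitney $(a)$-regularity of $\mathcal{S}_1$, $\mathcal{S}_2$ one gets that a collar of $V(f_1)$ inside $U_1$ deformation retracts onto $V(f_1)$, and likewise for $V(f_2)$. Then the plan is to build an explicit homotopy inverse to $\pi$ on $F_1^{-1}(J)$: on the part lying over $J$ away from small collars of the endpoints, $\pi$ is already a homeomorphism onto its image, and on the collars one uses the retraction of the tubular neighborhood of $V(f_j)$ onto $V(f_j)$ to produce a section of $\pi$ up to homotopy. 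The composite $\pi \circ (\text{section})$ is homotopic to the identity on $\pi(F_1^{-1}(J))$ because the only ambiguity is in the collapsed directions, which are contractible after quotienting; the composite $(\text{section}) \circ \pi$ is homotopic to the identity on $F_1^{-1}(J)$ by pushing along the collar retractions. Alternatively, and perhaps more cleanly, one can invoke that $\pi$ restricted to $F_1^{-1}(J)$ is a quotient map collapsing each of two disjoint closed subsets $V(f_1) \times Y_{\mathbf{t}}$ and $X_{\mathbf{t}} \times V(f_2)$ to a point, each of which is a cofibration (being a subcomplex in a suitable triangulation, available since everything is semi-analytic, cf.\ the triangulation cited in Lemma~5) and each of which is contractible within $F_1^{-1}(J)$ — because $V(f_1)$ is contractible in $U_1$ (it is the cone on its link, by conditions (i), (ii)) and $Y_{\mathbf{t}}$ sits in the contractible $U_2$, and symmetrically — so collapsing them does not change the homotopy type.

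The main obstacle I expect is the contractibility claim: one needs that $V(f_j)$ is contractible inside $U_j$, or at least that the two subsets being collapsed are contractible in $F_1^{-1}(J)$, and that the collapses are cofibrations. The contractibility of $V(f_j)$ follows from the local conical structure of real analytic sets (Lojasiewicz), and the cofibration property follows from semi-analytic triangulability as already used for Lemma~5; but one must be a little careful that the collar retractions of $U_j$ onto $V(f_j)$ can be chosen compatibly with the fibration data of $F_1$ over $J$ near the endpoints, which is where the Whitney $(a)$-regularity of $\mathcal{S}_1 \times \mathcal{S}_2$ and $a_f$-condition are really used. Once those technical points are in place, the homotopy equivalence $\pi : F_1^{-1}(J) \to \pi(F_1^{-1}(J))$ is immediate from the elementary fact that collapsing a contractible cofibered subcomplex is a homotopy equivalence, applied twice.
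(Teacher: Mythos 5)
Your proposal contains a genuine gap, and it sits exactly at the point you advertise as the ``cleaner'' route. The identification map $\pi : U_1 \times U_2 \to (U_1/V(f_1)) \times (U_2/V(f_2))$ does \emph{not} collapse the two endpoint fibers $F_1^{-1}(\mathbf{0}_p) = V(f_1) \times Y_{\mathbf{t}}$ and $F_1^{-1}(\mathbf{t}) = X_{\mathbf{t}} \times V(f_2)$ each to a single point: it collapses them slice-wise, sending $V(f_1) \times \{\yy\}$ to a point for each $\yy \in Y_{\mathbf{t}}$, so the image of $V(f_1) \times Y_{\mathbf{t}}$ is a copy of $Y_{\mathbf{t}}$, and symmetrically the image of $X_{\mathbf{t}} \times V(f_2)$ is a copy of $X_{\mathbf{t}}$ (you in fact say this correctly in your first paragraph, and then contradict it). Consequently the elementary fact you want to invoke --- collapsing a contractible cofibered subcomplex is a homotopy equivalence --- is not applicable as stated: the quotient is not of that form, and moreover $V(f_1) \times Y_{\mathbf{t}} \simeq Y_{\mathbf{t}}$ is the Milnor fiber of $f_2$ and is in general far from contractible, so collapsing it to a point would typically change the homology. (Also note that the hypothesis needed for that fact is contractibility of the subspace itself, not ``contractibility within $F_1^{-1}(J)$.'')

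Your first, fiberwise route is closer to what is actually needed, but it is left vague precisely where the work is: how to define a continuous inverse (or section) over the collapsed slices. The paper's proof resolves this by using the local conic structure of the analytic sets $V(f_1)$ and $V(f_2)$ to contract them to the origins, hence deforming $F_1^{-1}(\mathbf{0}_p)$ onto $\{\mathbf{0}_n\} \times Y_{\mathbf{t}}$ and $F_1^{-1}(\mathbf{t})$ onto $X_{\mathbf{t}} \times \{\mathbf{0}_m\}$, then extending these endpoint homotopies to a homotopy $H_s$ of all of $F_1^{-1}(J)$ via a triangulation and the homotopy extension property of a polyhedral pair; since $\pi$ is injective off the collapsed slices and $H_1$ pushes the collapsed slices into sets where $\pi$ is injective, one can write down an explicit map $\varphi$ on the quotient with $\varphi \circ \pi = H_1 \simeq \mathrm{id}$ and $\pi \circ \varphi = \tilde{H}_1 \simeq \mathrm{id}$. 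Your appeal to Whitney $(a)$-regularity and the $a_f$-condition for ``compatible collar retractions'' is not where the difficulty lies (those conditions are not what this lemma uses); what you are missing is the cone-structure-plus-HEP mechanism, or some equally concrete construction of the homotopy inverse at the collapsed slices.
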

\begin{proof}
The semi-analytic set $V(f_{j})$ has a conic structure for $j = 1, 2$ \cite{BV}, i.e., 
\begin{equation*}
\begin{split}
V(f_{1})& \cong \text{Cone}(V(f_{1}) \cap S^{n-1}_{\varepsilon}) = 
([0, 1] \times (V(f_{1}) \cap S^{n-1}_{\varepsilon}))/(\{0\} \times (V(f_{1}) \cap S^{n-1}_{\varepsilon})), \\ 
V(f_{2})& \cong \text{Cone}(V(f_{2}) \cap S^{m-1}_{\varepsilon}) = 
([0, 1] \times (V(f_{2}) \cap S^{m-1}_{\varepsilon}))/(\{0\} \times (V(f_{2}) \cap S^{m-1}_{\varepsilon})). 
\end{split}
\end{equation*}
So $V(f_{1})$ and $V(f_{2})$ contract to the origins of $\Bbb{R}^{n}$ and $\Bbb{R}^{m}$ respectively. 
We can construct deformation retractions from 
$F_{1}^{-1}(\mathbf{0}_{p}) = V(f_{1}) \times Y_{\mathbf{t}}$ to 
$\{ \mathbf{0}_{n}\} \times Y_{\mathbf{t}}$ and from 
$F_{1}^{-1}(\mathbf{t}) = X_{\mathbf{t}} \times V(f_{2})$ to 
$X_{\mathbf{t}} \times \{ \mathbf{0}_{m}\}$. 
By applying a triangulation of $F_{1}^{-1}(J)$ and 
using the homotopy extension property of a polyhedral pair \cite{Sp}, 
the above homotopies can extend to a homotopy 
$H_{s} : F_{1}^{-1}(J) \rightarrow F_{1}^{-1}(J)$ so that 
\[
H_{0} = \text{id}_{F_{1}^{-1}(J)}, \ \ 
H_{1}(F_{1}^{-1}(\mathbf{0}_{p}) \cup F_{1}^{-1}(\mathbf{t})) = 
\{ \mathbf{0}_{n}\} \times Y_{\mathbf{t}} \cup X_{\mathbf{t}} \times \{ \mathbf{0}_{m}\}, 
\]
where $0 \leq s \leq 1$. 
Let $\tilde{H}_{s} : \pi(F_{1}^{-1}(J)) \rightarrow \pi(F_{1}^{-1}(J))$ be the homotopy 
which satisfies $\pi(H_{s}(\xx, \yy)) = \tilde{H}_{s}(\pi(\xx, \yy))$, 
where $(\xx, \yy) \in F_{1}^{-1}(J)$ and $0 \leq s \leq 1$. 
Note that 
$\pi(F_{1}^{-1}(J)) \setminus 
(\{ \mathbf{0}_{n}\} \times Y_{\mathbf{t}} \cup X_{\mathbf{t}} \times \{ \mathbf{0}_{m}\}) = 
F_{1}^{-1}(J) \setminus (F_{1}^{-1}(\mathbf{0}_{p}) \cup F_{1}^{-1}(\mathbf{t}))$. 
The map $\varphi : \pi(F_{1}^{-1}(J)) \rightarrow F_{1}^{-1}(J)$ is defined by 
\begin{equation*}
\begin{split}
&\varphi \mid_{\pi(F_{1}^{-1}(J)) \setminus (\{ \mathbf{0}_{n}\} \times Y_{\mathbf{t}} \cup X_{\mathbf{t}} \times \{ \mathbf{0}_{m}\})} = 
H_{1} \mid_{F_{1}^{-1}(J) \setminus (F_{1}^{-1}(\mathbf{0}_{p}) \cup F_{1}^{-1}(\mathbf{t}))}, \\
&\varphi(\{ \mathbf{0}_{n}\} \times Y_{\mathbf{t}}) = \{ \mathbf{0}_{n}\} \times Y_{\mathbf{t}}, \ \
\varphi(X_{\mathbf{t}} \times \{ \mathbf{0}_{m}\}) = X_{\mathbf{t}} \times \{ \mathbf{0}_{m}\}. 
\end{split}
\end{equation*}
Then $\varphi$ is continuous and $H_{1} = \varphi \circ \pi$. 
By the definition of $\tilde{H}_{s}$, $\pi \circ \varphi = \tilde{H}_{1}$. 
Thus the identification map $\pi$ is a homotopy equivalence. 
\end{proof}

\begin{lemma}
Let $X_{\mathbf{t}} * Y_{\mathbf{t}}$ be the join of $X_{\mathbf{t}}$ and $Y_{\mathbf{t}}$. 
Then $X_{\mathbf{t}} * Y_{\mathbf{t}}$ is homeomorphic to $\pi(F_{1}^{-1}(J))$.  
\end{lemma}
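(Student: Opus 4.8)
The plan is to exhibit an explicit homeomorphism between the join $X_{\mathbf{t}} * Y_{\mathbf{t}}$ and the quotient space $\pi(F_1^{-1}(J))$, using the affine structure of the segment $J$ to supply the join parameter. Recall that $F_1^{-1}(J) = \{(\xx,\yy)\in U_1\times U_2 \mid f_1(\xx)\in J,\ f_1(\xx)+f_2(\yy)=\mathbf{t}\}$, so writing $f_1(\xx) = (1-s)\mathbf{0}_p + s\mathbf{t} = s\mathbf{t}$ for a uniquely determined $s\in[0,1]$ (here I use that $\mathbf 0_p \ne \mathbf t$ so $J$ is a genuine segment with well-defined affine coordinate $s$), a point of $F_1^{-1}(J)$ is the data of $s\in[0,1]$ together with $\xx\in f_1^{-1}(s\mathbf t)\cap U_1$ and $\yy\in f_2^{-1}((1-s)\mathbf t)\cap U_2$. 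First I would set up the canonical identifications $f_1^{-1}(s\mathbf t)\cap U_1 \cong X_{\mathbf t}$ for $s\in(0,1]$ and $f_2^{-1}((1-s)\mathbf t)\cap U_2 \cong Y_{\mathbf t}$ for $s\in[0,1)$, coming from the local triviality of the tubular Milnor fibrations of $f_1$ and $f_2$ over the contractible arcs $\{s\mathbf t : 0<s\le 1\}$ and $\{(1-s)\mathbf t : 0\le s <1\}$ respectively; since these arcs are contractible, the fibrations are trivial over them and such trivializations exist (one only needs them fiberwise up to homotopy, but in fact honest homeomorphisms of the total spaces over the arc suffice here).

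With these trivializations chosen, I would define a map $\Phi : [0,1]\times X_{\mathbf t}\times Y_{\mathbf t} \to F_1^{-1}(J)$ by sending $(s,a,b)$ to the point with $F_1$-coordinate $s\mathbf t$ whose $X$-component is the image of $a$ under the trivialization over $s$ and whose $Y$-component is the image of $b$ under the trivialization over $1-s$. The key observation is that $\Phi$ descends to the join: the join $X_{\mathbf t}*Y_{\mathbf t}$ is by definition $([0,1]\times X_{\mathbf t}\times Y_{\mathbf t})$ modulo the relations $(0,a,b)\sim(0,a',b)$ and $(1,a,b)\sim(1,a,b')$, and on the target side, when $s=0$ we have $F_1^{-1}(\mathbf 0_p) = V(f_1)\times Y_{\mathbf t}$, which maps under $\pi$ to $\{\mathbf 0_n\}\times Y_{\mathbf t}$ (collapsing $V(f_1)$ to the cone point $\mathbf 0_n$), so the $X$-coordinate $a$ is forgotten — exactly matching the join relation; symmetrically at $s=1$. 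So $\pi\circ\Phi$ is constant on join-equivalence classes and induces $\overline{\Phi}: X_{\mathbf t}*Y_{\mathbf t}\to \pi(F_1^{-1}(J))$.

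Then I would check $\overline{\Phi}$ is a continuous bijection: continuity is inherited from continuity of $\Phi$ and of $\pi$ (and from the quotient topology on the join); surjectivity is clear because every point of $F_1^{-1}(J)$ has the above normal form; injectivity follows from the uniqueness of the affine coordinate $s$ together with the bijectivity of the chosen trivializations, after one accounts precisely for the collapsing at the two endpoints, which is the content of the matching of relations just described. Finally, since $X_{\mathbf t}*Y_{\mathbf t}$ is compact (being a quotient of the compact space $[0,1]\times X_{\mathbf t}\times Y_{\mathbf t}$, using that $X_{\mathbf t}$ and $Y_{\mathbf t}$ are compact fibers of tubular Milnor fibrations) and $\pi(F_1^{-1}(J))$ is Hausdorff (it is a subspace of the Hausdorff space $(U_1/V(f_1))\times(U_2/V(f_2))$, the quotients being Hausdorff because $V(f_j)$ is a closed subset), $\overline{\Phi}$ is automatically a homeomorphism.

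The main obstacle I anticipate is the bookkeeping at the two endpoints $s=0,1$: one must verify carefully that the collapse performed by $\pi$ — replacing $V(f_1)$ by the single point $\mathbf 0_n$ and $V(f_2)$ by $\mathbf 0_m$ — corresponds term-for-term to the two identifications built into the definition of the join, and that the chosen trivializations over the half-open arcs extend compatibly with this collapse so that $\Phi$ and hence $\overline\Phi$ are genuinely continuous across $s=0$ and $s=1$ (rather than merely on the open part). Once the endpoint matching is pinned down, the rest is the standard "continuous bijection from compact to Hausdorff" argument.
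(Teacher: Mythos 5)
Your construction is essentially the paper's own proof: the paper also takes trivializations $\tilde{\phi}_{1},\tilde{\phi}_{2}$ of $f_{1},f_{2}$ over $J\setminus\{\mathbf{0}_{p}\}$ with $f_{j}(\tilde{\phi}_{j}(s,\cdot))=s\mathbf{t}$, defines $\Phi(\xx,s,\yy)=\pi(\tilde{\phi}_{1}(s,\xx),\tilde{\phi}_{2}(1-s,\yy))$, and passes to the induced continuous bijection $X_{\mathbf{t}}*Y_{\mathbf{t}}\to\pi(F_{1}^{-1}(J))$, concluding it is a homeomorphism (implicitly by the compact-to-Hausdorff argument you spell out). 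The endpoint issue you flag is resolved there by the convention $\tilde{\phi}_{1}(0,\xx)=\mathbf{0}_{n}$, $\tilde{\phi}_{2}(0,\yy)=\mathbf{0}_{m}$, with continuity across $s=0,1$ in the quotient coming from the conic structure of $V(f_{1})$, $V(f_{2})$ (equivalently, from compactness of $U_{j}$, since points with $\|f_{j}\|$ small must lie in any given neighborhood of $V(f_{j})$), exactly matching the join identifications as you describe.
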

\begin{proof}
Put $I = [0, 1]$. 
By the local trivialities of the tubular Milnor fibrations of $f_{1}$ and $f_{2}$, 
there exist homeomorphisms 
\[
\tilde{\phi}_{1} : (I \setminus \{0\}) \times X_{\mathbf{t}} \rightarrow f_{1}^{-1}(J \setminus \{ \mathbf{0}_{p} \}) \cap U_{1}, \ \ 
\tilde{\phi}_{2} : (I \setminus \{0\}) \times Y_{\mathbf{t}} \rightarrow f_{2}^{-1}(J \setminus \{ \mathbf{0}_{p} \}) \cap U_{2}
\] 
such that $f_{1}(\tilde{\phi}_{1}(s, \xx)) = f_{2}(\tilde{\phi}_{2}(s, \yy)) = s\mathbf{t}$ for $0 < s \leq 1$. 
We define the map 
\begin{center}
$\Phi : X_{\mathbf{t}} \times I \times Y_{\mathbf{t}} \rightarrow \pi(F_{1}^{-1}(J))$ as 
$(\xx, s, \yy) \mapsto \pi(\tilde{\phi}_{1}(s, \xx), \tilde{\phi}_{2}(1 - s, \yy))$, 
\end{center}
where $\tilde{\phi}_{1}(0, \xx) = \mathbf{0}_{n}$ and  $\tilde{\phi}_{2}(0, \yy) = \mathbf{0}_{m}$. 
Since $V(f_{1})$ and $V(f_{2})$ have conic structures, 
$\Phi$ is a continuous map. 
Let $\Psi : X_{\mathbf{t}} * Y_{\mathbf{t}} \rightarrow \pi(F_{1}^{-1}(J))$ be the map 
defined by $\Psi([\xx, s, \yy]) = \Phi(\xx, s, \yy)$, where 
$[\xx, s, \yy]$ is the equivalence class of $(\xx, s, \yy)$. 
By the definition of $\Phi$ and conic structures of $V(f_{1})$ and $V(f_{2})$, 
$\Psi$ is a continuous and bijective map. 
Thus $\Psi$ is a homeomorphism. 
\end{proof}


\begin{lemma}
The fiber $Z_{\mathbf{t}}$ is homotopy equivalent to $f^{-1}(\mathbf{t}) \cap D^{n+m}_{\varepsilon'}$, 
where $0 < \varepsilon' \ll 1$. 
\end{lemma}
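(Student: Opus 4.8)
The plan is to compare two tubular neighborhoods of the common fiber over $\mathbf{t}$: the one cut out by the polydisc-type region $U_1 \times U_2$ (whose fiber is $Z_{\mathbf t}$) and the one cut out by the genuine ball $D^{n+m}_{\varepsilon'}$. The strategy is the standard ``independence of the tubular Milnor fiber from the choice of the defining region'' argument, carried out via a single locally trivial fibration over a space of radii. First I would invoke Corollary~$1$: there is $\varepsilon'_0$ and $\delta'$ so that $f$ restricted to $B^{n+m}_{\varepsilon'} \cap f^{-1}(D^p_{\delta'}\setminus\{\mathbf 0_p\})$ is a locally trivial fibration for every $0<\varepsilon'\le \varepsilon'_0$; this already makes $f^{-1}(\mathbf t)\cap D^{n+m}_{\varepsilon'}$ well-defined up to homeomorphism independently of $\varepsilon'$ (for $\|\mathbf t\|$ small enough relative to the chosen $\varepsilon'$). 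So it suffices to compare $Z_{\mathbf t}=f^{-1}(\mathbf t)\cap(U_1\times U_2)$ with $f^{-1}(\mathbf t)\cap D^{n+m}_{\varepsilon'}$ for one convenient pair of parameters.

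Next I would set up an interpolating family. For $r$ ranging in a closed interval $[\varepsilon', \varepsilon]$ (with $\varepsilon$ the radius used to define $U_1,U_2$, and $\varepsilon'$ small), consider the regions $W_r$ obtained by intersecting $f^{-1}(\mathbf t)$ with a family of compact sets that deforms from $D^{n+m}_{\varepsilon'}$ to $U_1\times U_2$ — for instance, $W_r = f^{-1}(\mathbf t)\cap (B^n_{\rho_1(r)}\times B^m_{\rho_2(r)}\times\cdots)$ chosen so that the endpoints are the two sets in question. The key input is that $f^{-1}(\mathbf t)$ meets each sphere $S^{n+m-1}_r$ (and, for the product-region version, each of the relevant boundary faces) transversely: this is exactly what Lemma~$1$ (applied to $f=f_1+f_2$, which satisfies (i),(ii) by Lemma~$2$) guarantees for radii in $[\varepsilon', \varepsilon]$, after shrinking, together with the transversality of $f_j^{-1}(\mathbf t)$ with $\partial B^n_\varepsilon$, $\partial B^m_\varepsilon$ coming from the setup in Section~$1$. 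Transversality of the fiber with the boundary of each $W_r$ lets one apply the Ehresmann fibration theorem \cite{W} to the map $r\mapsto W_r$ (more precisely, to the projection from $\{(x,r): x\in W_r\}$ to $[\varepsilon',\varepsilon]$), yielding a locally trivial — hence, over an interval, trivial — fibration. Consequently all the $W_r$ are homeomorphic; in particular $Z_{\mathbf t}\cong W_\varepsilon$ is homeomorphic (a fortiori homotopy equivalent) to $W_{\varepsilon'}$, and $W_{\varepsilon'}$ is in turn $f^{-1}(\mathbf t)\cap D^{n+m}_{\varepsilon'}$ up to the identification from Corollary~$1$.

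Alternatively, and perhaps more cleanly, one can avoid interpolating the shapes: since $f^{-1}(\mathbf t)$ is compact and semi-analytic, it admits a triangulation, and the transversality statements of Lemma~$1$ show that the function $\|(\xx,\yy)\|$ restricted to $f^{-1}(\mathbf t)\cap(U_1\times U_2)$ has no critical points with value in $(\varepsilon',\varepsilon]$ in the stratified sense; a stratified gradient-flow (Thom's first isotopy lemma, or the conic-structure argument already used via \cite{BV}) then produces a deformation retraction of $Z_{\mathbf t}$ onto $f^{-1}(\mathbf t)\cap \overline{B}^{n+m}_{\varepsilon'}$, and a symmetric argument handles the difference between the round ball and the product region near the corner. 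The main obstacle I anticipate is the corner: $U_1\times U_2$ is not a smooth manifold with boundary but a manifold with corners, so the naive Ehresmann argument must be replaced by a stratified version (stratifying $\partial(U_1\times U_2)$ into the two smooth faces and the corner, and checking $f^{-1}(\mathbf t)$ is transverse to each stratum). Once that bookkeeping is done, the homeomorphism type is constant along the family and the lemma follows.
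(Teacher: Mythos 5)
Your strategy (interpolate between the small round ball and $U_1\times U_2$ and invoke Ehresmann/Thom--Mather to get constancy of the homeomorphism type) is genuinely different from the paper's, and as written it has concrete gaps. First, you treat $U_1\times U_2$ as if it were essentially $B^n_{\varepsilon}\times B^m_{\varepsilon}$ with one corner: in fact $U_j=\{\|f_j\|\le\delta\}\cap B_{\varepsilon}$ is a Milnor tube, so $\partial(U_1\times U_2)$ also has the faces $\{\|f_1\|=\delta\}$ and $\{\|f_2\|=\delta\}$, and since $\|\mathbf{t}\|\ll\delta$ the fiber $Z_{\mathbf{t}}$ really does have boundary on these tube faces (and on the corresponding corners). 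None of the transversality you quote (Lemma 1 for spheres, transversality of $f_j^{-1}$ with $\partial B_{\varepsilon}$ from Section 1) addresses these faces, nor the stratified transversality at points of $Z_{\mathbf{t}}$ lying over $V(f_1)\times f_2^{-1}(\mathbf{t})$ (resp.\ $f_1^{-1}(\mathbf{t})\times V(f_2)$), where $V(f_j)$ may be singular; these statements need proofs and are where the $a_f$-condition would actually have to be used. Second, Lemma 1 only gives transversality of $f^{-1}(\mathbf{t})$ with spheres $S^{n+m-1}_r$ for $r$ below some small radius $r_0$ attached to $f$, whereas your interpolating family must sweep through regions as large as $U_1\times U_2$ (points of norm up to about $\sqrt{2}\,\varepsilon$); no lemma in the paper controls the fiber at those scales, so the claimed triviality of the family $\{W_r\}$ (and likewise the ``no critical values of the norm in $(\varepsilon',\varepsilon]$'' claim in your gradient-flow variant, which moreover must keep the flow compatible with the tube faces) is unsupported. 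Note also that you are aiming at a homeomorphism, which is strictly stronger than the homotopy equivalence the lemma asserts, so you have raised the bar precisely where the hypotheses are weakest.

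By contrast, the paper avoids all boundary transversality for $U_1\times U_2$: using Lemma 1 it chooses $\varepsilon_1<\varepsilon_2$ with $D^{n+m}_{\varepsilon_1}\subset U_1\times U_2\subset D^{n+m}_{\varepsilon_2}$ and with the inclusion of the $\varepsilon_1$-ball fiber into the $\varepsilon_2$-ball fiber a homotopy equivalence, then uses the $a_f$-condition and Lemma 1 to find a smaller product region $U'_1\times U'_2\subset D^{n+m}_{\varepsilon_1}$ on which the join-model equivalences (Lemmas 4 and 5, via $F_1^{-1}(J)$ and the quotient $\pi$) still identify the fiber with $\pi(F_1^{-1}(J))\simeq Z_{\mathbf{t}}$. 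Sandwiching the inclusions then shows that $f^{-1}(\mathbf{t})\cap D^{n+m}_{\varepsilon_1}\hookrightarrow Z_{\mathbf{t}}$ induces isomorphisms on all homotopy groups, and Whitehead's theorem (both spaces being CW) upgrades this to a homotopy equivalence; a final application of Lemma 1 replaces $\varepsilon_1$ by $\varepsilon'$. If you want to salvage your route, you would have to prove the missing stratified transversality statements for the tube faces and for all intermediate regions up to the scale of $U_1\times U_2$; the sandwich-plus-Whitehead argument is precisely how the paper sidesteps that work.
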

\begin{proof}
By Lemma $1$, we can choose positive real numbers 
$\varepsilon_{1}$ and $\varepsilon_{2}$ such that $\varepsilon_{1} < \varepsilon_{2} \ll 1$ and the inclusion 
\[
f^{-1}(\mathbf{t}) \cap D^{n+m}_{\varepsilon_{1}} \hookrightarrow f^{-1}(\mathbf{t}) \cap D^{n+m}_{\varepsilon_{2}} 
\]
is a homotopy equivalence. 
Since $U_{1} \subset B^{n}_{\varepsilon}, U_{2} \subset B^{m}_{\varepsilon}$ and 
$\varepsilon$ is sufficiently small, 
we can also choose $\varepsilon_{1}$ and $\varepsilon_{2}$ which satisfy 
\[
D^{n+m}_{\varepsilon_{1}} \subset (U_{1} \times U_{2}) \subset 
D^{n+m}_{\varepsilon_{2}}. 
\]
By $a_{f}$-condition of $f_{j}$ and Lemma $1$, 
there exist neighborhoods $U'_{1}$ of $\mathbf{0}_{n}$ and $U'_{2}$ of $\mathbf{0}_{m}$ such that 
the inclusion 
\[
\pi(F_{1}^{-1}(J) \cap U'_{1} \times U'_{2}) \hookrightarrow \pi(F_{1}^{-1}(J))
\] 
is a homotopy equivalence and 
\[
(U'_{1} \times U'_{2}) \subset D^{n+m}_{\varepsilon_{1}} \subset (U_{1} \times U_{2}) \subset 
D^{n+m}_{\varepsilon_{2}}. 
\]
By using Lemma $4$, Lemma $5$ and the above homotopy, we have 
\begin{equation*}
\begin{split}
f^{-1}(\mathbf{t}) \cap (U'_{1} \times U'_{2}) \simeq Z_{\mathbf{t}} \cap (U'_{1} \times U'_{2}) 
&\simeq F_{1}^{-1}(J) \cap (U'_{1} \times U'_{2}) \\
&\simeq \pi(F_{1}^{-1}(J) \cap U'_{1} \times U'_{2}) \simeq \pi(F_{1}^{-1}(J)) \simeq Z_{\mathbf{t}}. 
\end{split}
\end{equation*}
Here $\simeq$ denotes a homotopy equivalence. 
Since $f^{-1}(\mathbf{t}) \cap (U'_{1} \times U'_{2})$ is homotopy equivalent to $Z_{\mathbf{t}}$ and 
$(U'_{1} \times U'_{2}) \subset D^{n+m}_{\varepsilon_{1}} \subset (U_{1} \times U_{2})$, 
this homotopy equivalence induces the following isomorphism of homotopy groups: 
\[
\pi_{u}(f^{-1}(\mathbf{t}) \cap (U'_{1} \times U'_{2})) \cong 
\pi_{u}(f^{-1}(\mathbf{t}) \cap D^{n+m}_{\varepsilon_{1}}) \cong \pi_{u}(Z_{\mathbf{t}}), 
\]
where $u \geq 0$. 
So the inclusion map $f^{-1}(\mathbf{t}) \cap D^{n+m}_{\varepsilon_{1}} \hookrightarrow Z_{\mathbf{t}}$ 
is a weak homotopy equivalence. 
Since $f^{-1}(\mathbf{t}) \cap D^{n+m}_{\varepsilon_{1}}$ and $Z_{\mathbf{t}}$ are CW-complexes, 
$Z_{\mathbf{t}}$ is homotopy equivalent to $f^{-1}(\mathbf{t}) \cap D^{n+m}_{\varepsilon_{1}}$ \cite{Sp}. 
Since $\varepsilon'$ and $\| t\|$ are sufficiently small, by Lemma $1$, 
$Z_{\mathbf{t}}$ is homotopy equivalent to $f^{-1}(\mathbf{t}) \cap D^{n+m}_{\varepsilon'}$. 
\end{proof}

\begin{proof}[Proof of Theorem 2]
By using Lemma $4$, Lemma $5$ and Lemma $6$, we can show that $X_{\mathbf{t}} * Y_{\mathbf{t}}$ 
is homotopy equivalent to $Z_{\mathbf{t}}$. 
By Lemma $7$, the fiber of the tubular Milnor fibration of $f$ is homotopy equivalent to 
$X_{\mathbf{t}} * Y_{\mathbf{t}}$. 

If $p =2$,  set 
\[
E = \{ (\xx, \yy) \in U_{1} \times U_{2} \mid 0 < \| f(\xx, \yy)\| \leq \rho \},
\]
where $0 < \rho \ll 1$. 
Then the map $\tilde{f} : \pi(E) \rightarrow  D^{2}_{\rho} \setminus \{ \mathbf{0}_{2}\}$ is defined by 
$\tilde{f}(\pi(\xx, \yy)) = f(\xx, \yy)$. 
By the local trivialities of $f_{1}$ and $f_{2}$, there are continuous 
one-parameter families of homeomorphisms
\[
\alpha_{\theta} : U_{1} \setminus V(f_{1}) \rightarrow U_{1} \setminus V(f_{1}), \ \ 
\beta_{\theta} : U_{2} \setminus V(f_{2}) \rightarrow U_{2} \setminus V(f_{2})
\]
such that $f_{1}(\alpha_{\theta}(\xx)) = e^{i\theta}f_{1}(\xx)$ and 
$f_{2}(\beta_{\theta}(\yy)) = e^{i\theta}f_{2}(\yy)$, where $\theta \in [0, 2\pi]$. 
Then we define the map
$\gamma_{\theta} : \pi(E) \rightarrow \pi(E)$ as follows: 
\[ 
\gamma_{\theta}(\pi(\xx, \yy)) = \begin{cases}
                                  \pi(\alpha_{\theta}(\xx), \beta_{\theta}(\yy)) & \xx \in U_{1} \setminus V(f_{1}), \yy \in U_{2} \setminus V(f_{2}) \\
                                  \pi(\mathbf{0}_{n}, \beta_{\theta}(\yy)) & \xx \in V(f_{1}), \yy \in U_{2} \setminus V(f_{2}) \\
                                  \pi(\alpha_{\theta}(\xx), \mathbf{0}_{m}) & \xx \in U_{1} \setminus V(f_{1}), \yy \in V(f_{2}). 
                                 \end{cases} 
\]
Note that $\{ \gamma_{\theta}\}$ is well-defined and a continuous one-parameter family of homeomorphisms such that 
$\tilde{f}(\gamma_{\theta}(\zz)) = e^{i\theta}\tilde{f}(\zz)$, where $\zz \in \pi(E)$ and 
$\theta \in [0, 2\pi]$. 
Hence $\{ \gamma_{\theta}\}$ gives the local triviality of $\tilde{f}$. 
Then the monodromy of $\tilde{f}$ can be identified with $\alpha_{2\pi}*\beta_{2\pi}$ up to homotopy. 
Here the map $\alpha_{2\pi}*\beta_{2\pi}$ is defined by 
\[
\alpha_{2\pi}*\beta_{2\pi}([\xx, s, \yy]) = [\alpha_{2\pi}(\xx), s, \beta_{2\pi}(\yy)], 
\]
where $[\xx, s, \yy] \in X_{\mathbf{t}} * Y_{\mathbf{t}}$.

By Lemma $7$, the fiber of $\tilde{f}$ is homotopy equivalent to the fiber of $f$. 
Since $D^{2}_{\rho} \setminus \{ \mathbf{0}_{2}\}$ is a CW-complex and 
$\tilde{f}^{-1}(\mathbf{t})$ is homotopy equivalent to $f^{-1}(\mathbf{t})$ for 
any $\mathbf{t} \in D^{2}_{\rho} \setminus \{ \mathbf{0}_{2}\}$, 
$\tilde{f}$ is fiber homotopy equivalent to $f$ \cite{D}. 
Then the monodromy of the tubular Milnor fibration of $f$ is equal to $\alpha_{2\pi}*\beta_{2\pi}$. 
\end{proof}

\begin{proof}[Proof of Corollary $1$]
By Theorem $2$ and the condition (iii), the fiber of the spherical Milnor fibration of $f$ 
is homotopy equivalent to 
$X_{\mathbf{t}} * Y_{\mathbf{t}}$, where $0 < \| \mathbf{t} \| \ll 1 $. 
By the condition (iii), $X_{\mathbf{t}}$ and $Y_{\mathbf{t}}$ are diffeomorphic to 
the fibers of the spherical Milnor fibrations of $f_1$ and $f_2$ respectively. 
This completes the proof. 
\end{proof}

Let $F_j$ be the fiber of the spherical Milnor fibration of $f_j$ 
which satisfies the assumptions in Section $2.2$ for $j=1, 2$. 
By \cite{M}, the reduced homology $\tilde{H}_{n+m-1}(F_{1} * F_{2})$ satisfies 
\[
\tilde{H}_{n+m-1}(F_{1} * F_{2}) = \sum_{i+j = n+m-2} \tilde{H}_{i}(F_{1}, \Bbb{Z}) \otimes \tilde{H}_{j}(F_{2}, \Bbb{Z}) + 
\sum_{i'+j' = n+m-3}\text{Tor}(\tilde{H}_{i'}(F_{1}, \Bbb{Z}), \tilde{H}_{j'}(F_{2}, \Bbb{Z})).
\]
Let $F$ be the fiber of the spherical Milnor fibration of $f = f_{1} + f_{2}$ and 
$\tau : F \rightarrow F_{1} * F_{2}$ be the homotopy equivalence in Theorem $2$. 
Then $f$ also satisfies the assumptions in Section $2.2$ and 
we have the following commutative diagram: 
\def\mapright#1{\smash{\mathop{\longrightarrow}\limits^{{#1}}}}
\def\mapdown#1{\Big\downarrow\rlap{$\vcenter{\hbox{$#1$}}$}}
\[
\begin{matrix}
\tilde{H}_{n+m-1}(F, \Bbb{Z})&\mapright{\gamma_{*}}& \tilde{H}_{n+m-1}(F, \Bbb{Z}) \\
\mapdown{\tau}&&\mapdown{\tau}\\
\tilde{H}_{n-1}(F_{1}, \Bbb{Z}) \otimes \tilde{H}_{m-1}(F_{2}, \Bbb{Z}) & \mapright{\alpha_{*} \otimes \beta_{*}} & \tilde{H}_{n-1}(F_{1}, \Bbb{Z}) \otimes \tilde{H}_{m-1}(F_{2}, \Bbb{Z}) 
\end{matrix},
\]
where $\alpha_{*}, \beta_{*}$ and $\gamma_{*}$ are the linear transformations induced by the monodromy of 
the spherical Milnor fibrations of $f_{1}, f_{2}$ and $f$ respectively. 
Since the eigenvalues of the linear transformation 
$\alpha_{*} \otimes \beta_{*} : \tilde{H}_{n-1}(F_{1}, \Bbb{Z}) \otimes \tilde{H}_{m-1}(F_{2}, \Bbb{Z}) 
\rightarrow \tilde{H}_{n-1}(F_{1}, \Bbb{Z}) \otimes \tilde{H}_{m-1}(F_{2}, \Bbb{Z})$ are given by the product of 
the eigenvalues of $\alpha_{*}$ and $\beta_{*}$, 
we obtain the following corollary.

\begin{corollary}
Assume that $f_{1}$ and $f_{2}$ satisfy the assumptions in Section $2.2$.
Let $\tilde{\zeta}_{1}(t), \tilde{\zeta}_{2}(t)$ and $\tilde{\zeta}(t)$ of the reduced zeta functions defined by 
$\alpha_{*}, \beta_{*}$ and $\gamma_{*}$ respectively. Then the divisors of the reduced zeta functions 
are related by 
\[
(\tilde{\zeta}(t)) = (\tilde{\zeta}_{1}(t))\cdot(\tilde{\zeta}_{2}(t)). 
\]
\end{corollary}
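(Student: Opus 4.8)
The plan is to turn the identification of monodromy operators furnished by the commutative diagram above into the claimed multiplicativity of divisors, via the elementary fact that the divisor of a characteristic polynomial is the sum of the $\langle\cdot\rangle$ of the reciprocals of the eigenvalues.

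First I would record the consequences of the hypotheses. By Corollary $1$ the fiber $F$ of the spherical Milnor fibration of $f=f_{1}+f_{2}$ is homotopy equivalent to $F_{1}*F_{2}$ through the map $\tau$ of Theorem $2$, and, as noted above, $f$ again satisfies (a), (b) and (c). Thus $\tilde{H}_{*}(F_{1}, \Bbb{Z})$ is free and concentrated in degree $n-1$, $\tilde{H}_{*}(F_{2}, \Bbb{Z})$ in degree $m-1$, and $\tilde{H}_{*}(F, \Bbb{Z})$ in degree $n+m-1$. In the join formula for $\tilde{H}_{n+m-1}(F_{1}*F_{2})$ quoted above, the only surviving term of the first sum is $\tilde{H}_{n-1}(F_{1})\otimes\tilde{H}_{m-1}(F_{2})$ and every $\text{Tor}$-term vanishes, both for degree reasons and because the groups are free, so $\tilde{H}_{n+m-1}(F)\cong\tilde{H}_{n-1}(F_{1})\otimes\tilde{H}_{m-1}(F_{2})$. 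The commutative diagram above then says that, after the conjugation induced by $\tau$, the monodromy operator $\gamma_{*}$ of $f$ on $\tilde{H}_{n+m-1}(F)$ coincides with $\alpha_{*}\otimes\beta_{*}$; in particular $\det(\text{Id}-t\gamma_{*})=\det(\text{Id}-t(\alpha_{*}\otimes\beta_{*}))$.

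Next I would rewrite the three reduced zeta functions. Since $f_{1}$, $f_{2}$ and $f$ satisfy (a), (b), (c), the formula of Section $2.2$ gives
\[
\tilde{\zeta}_{1}(t)=\det(\text{Id}-t\alpha_{*})^{(-1)^{n}}, \quad
\tilde{\zeta}_{2}(t)=\det(\text{Id}-t\beta_{*})^{(-1)^{m}}, \quad
\tilde{\zeta}(t)=\det(\text{Id}-t\gamma_{*})^{(-1)^{n+m}}.
\]
The monodromies are homeomorphisms, so $\alpha_{*}$ and $\beta_{*}$ are invertible; let $\mu_{1},\dots,\mu_{a}$ and $\nu_{1},\dots,\nu_{b}$ be their eigenvalues in $\Bbb{C}^{*}$, counted with multiplicity. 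Then $\det(\text{Id}-t\alpha_{*})=\prod_{i}(1-\mu_{i}t)$; its leading coefficient $\prod_{i}(-\mu_{i})$ is nonzero, so by the definition of the divisor its divisor is $\sum_{i}\langle\mu_{i}^{-1}\rangle$. Hence $(\tilde{\zeta}_{1}(t))=(-1)^{n}\sum_{i}\langle\mu_{i}^{-1}\rangle$ and $(\tilde{\zeta}_{2}(t))=(-1)^{m}\sum_{j}\langle\nu_{j}^{-1}\rangle$.

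Finally I would assemble the computation. The eigenvalues of $\alpha_{*}\otimes\beta_{*}$, hence of $\gamma_{*}$, are precisely the products $\mu_{i}\nu_{j}$, so $\det(\text{Id}-t\gamma_{*})=\prod_{i,j}(1-\mu_{i}\nu_{j}t)$ and $(\tilde{\zeta}(t))=(-1)^{n+m}\sum_{i,j}\langle(\mu_{i}\nu_{j})^{-1}\rangle$. Using the group-ring multiplication on $\Bbb{Z}(\Bbb{C}^{*})$, under which $\langle\mu_{i}^{-1}\rangle\cdot\langle\nu_{j}^{-1}\rangle=\langle\mu_{i}^{-1}\nu_{j}^{-1}\rangle=\langle(\mu_{i}\nu_{j})^{-1}\rangle$, together with $(-1)^{n+m}=(-1)^{n}(-1)^{m}$, this last expression equals $(\tilde{\zeta}_{1}(t))\cdot(\tilde{\zeta}_{2}(t))$, which is the assertion. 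I do not expect a serious obstacle here: the substantive input, the identification $\gamma_{*}\cong\alpha_{*}\otimes\beta_{*}$, is already packaged in the commutative diagram, and the only points needing care are that the $\text{Tor}$-terms in the join formula genuinely drop out --- which is exactly the role of assumptions (a)--(c) --- and that the signs $(-1)^{n}$, $(-1)^{m}$, $(-1)^{n+m}$ are tracked consistently when passing from polynomials to divisors.
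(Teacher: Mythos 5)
Your argument is correct and is essentially the paper's own: the paper deduces the corollary directly from the commutative diagram, the join homology formula (whose Tor and off-degree terms vanish under assumptions (a)--(c)), and the fact that the eigenvalues of $\alpha_{*}\otimes\beta_{*}$ are the products of those of $\alpha_{*}$ and $\beta_{*}$. Your write-up merely makes explicit the divisor bookkeeping (roots of $\det(\mathrm{Id}-t\,h_{*})$ being reciprocal eigenvalues and the signs $(-1)^{n},(-1)^{m},(-1)^{n+m}$), which matches the paper's intended reasoning.
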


\begin{Example}
Let $f_{1}(z_{1}, z_{2})$ and $f_{2}(w_{1}, w_{2})$ be mixed polynomials of independent variables. 
In \cite[Corollary 7.6]{B}, we can choose $f_1$ and $f_2$ such that  
$K_{f_{j}}$ is the figure-$8$ knot and 
$f_j$ has the spherical Milnor fibration for $j = 1, 2$.
Then $\tilde{\zeta}_{1}(t)$ and $\tilde{\zeta}_{2}(t)$ are equal to $t^{2} -3t +1$. 
Note that $\tilde{\zeta}_{1}(t)$ and $\tilde{\zeta}_{2}(t)$ are not cyclotomic polynomials. 
By Corollary $3$, we have 
\[
(\tilde{\zeta}(t)) = 2\langle 1\rangle + \Bigl\langle \frac{7 + 3\sqrt{5}}{2}\Bigr\rangle + \Bigl\langle \frac{7 - 3\sqrt{5}}{2}\Bigr\rangle. 
\] 
\end{Example}

\section{Seifert forms of simple links defined by mixed functions}
Let $K$ be a \textit{link} in the $(2k+1)$-sphere $S^{2k+1}$, 
i.e., $K$ is an oriented codimension-two closed smooth submanifold in $S^{2k+1}$. 
A link $K$ is said to be \textit{fibered} if 
there exists a trivialization $K \times D^{2} \rightarrow N(K)$ of 
a tubular neighborhood $N(K)$ of $K$ in $S^{2k+1}$ 
and a fibration of the link exterior $E(K) = S^{2k+1} \setminus$ Int$(N(K))$, 
$\xi_{1} : E(K) \rightarrow S^1$ 
such that $\xi_{0}|\partial N(K) = \xi_{1}|\partial N(K)$, 
where $\xi_{0} : N(K) \rightarrow D^2$ is a trivialization $K \times D^{2} \rightarrow N(K)$ 
composed with the second factor. 
This fibration is also called an \textit{open book decomposition} of $S^{2k+1}$. 
A fiber of $\xi_{1}$ is called a \textit{fiber surface of the fibration of $K$}. 
If $f(\zz, \bar{\zz})$ is convenient strongly non-degenerate, $K_f$ is a fibered link \cite{O1}.

We assume that a fibered link $K$ in $S^{2k+1}$ is $(k - 2)$-connected and 
its fiber surface $F$ is $(k - 1)$-connected. 
Then $K$ is called a \textit{simple fibered link}.
Let $\alpha, \beta \in \tilde{H}_{k}(F; \Bbb{Z})$ and 
$a$ and $b$ be cycles on $F$ 
representing $\alpha$ and $\beta$ respectively. 
Set 
\[
L_{K}(\alpha, \beta) := \text{link}(a^{+}, b), 
\]
where $a^{+}$ is a pushed off of $a$ to the positive side of $F$ by a transverse vector field 
and $\text{link}(a^{+}, b)$ is the linking number of $a^{+}$ and $b$. 
The \textit{Seifert form} $L_{K}$ of $K$ is the non-singular bilinear form 
\[
L_{K} : \tilde{H}_{k}(F; \Bbb{Z}) \times \tilde{H}_{k}(F; \Bbb{Z}) \rightarrow \Bbb{Z}  
\]
on the $k$-th homology group $\tilde{H}_{k}(F; \Bbb{Z})$ 
with respect to a choice of basis of $\Tilde{H}_{k}(F; \Bbb{Z})$. 
By \cite{KN}, we can show the following proposition. 

\begin{proposition}[\cite{KN}]
Let $f_{1} : (\Bbb{C}^{n}, O_{n}) \rightarrow (\Bbb{C}, 0)$ and  
$f_{2} : (\Bbb{C}^{m}, O_{m}) \rightarrow (\Bbb{C}, 0)$ be mixed function germs of 
independent variables 
which satisfy the condition (iii). 
Suppose that 
the origin is an 
isolated singularity of $f_j$ and 
$K_{f_{j}}$ is a simple fibered link for $j = 1, 2$. 
Then 
$L_{K_{f}}$ is congruent to $(-1)^{nm}L_{K_{f_{1}}} \otimes L_{K_{f_{2}}}$. 
\end{proposition}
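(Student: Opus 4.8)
The strategy is to reduce Proposition~$1$ to the classical computation of Kauffman--Neumann~\cite{KN} for the Seifert form of a join of fibered links, using Theorem~$2$ to supply the geometric input that in the mixed setting was previously only available in the holomorphic case. First I would record, via the condition (iii) and the hypothesis that $K_{f_j}$ is a simple fibered link, that $f = f_1 + f_2$ also has an isolated singularity at the origin (Theorem~$3$-type input together with the join formula for the fiber from Theorem~$2$) and that $K_f$ is again a simple fibered link: the fiber $F$ of the spherical Milnor fibration of $f$ is, by Corollary~$1$, homotopy equivalent to $F_1 * F_2$, and the join of an $(k_1-1)$-connected space and an $(k_2-1)$-connected space is $(k_1+k_2)$-connected, which forces the correct connectivity of $F$ and of $K_f = S^{n+m-1} \cap f^{-1}(0)$ for the simple-fibered-link conditions to hold with $k = $ (half the ambient dimension minus one). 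By the Künneth-type formula for joins quoted just before Corollary~$3$, $\tilde H_k(F) \cong \tilde H_{k_1}(F_1) \otimes \tilde H_{k_2}(F_2)$ in the relevant middle degree, so the three Seifert forms live on spaces related by a tensor product, which is exactly the shape of the claimed congruence.

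Next I would pin down the transverse pushoff on $F_1 * F_2$ in terms of those on $F_1$ and $F_2$. The key geometric point is that the open book decomposition of $S^{n+m-1}$ with page $F$ and monodromy $h$ is, up to the fiber homotopy equivalence of Theorem~$2$, the join of the two open books: the page is $F_1 * F_2$, the monodromy is $h_1 * h_2$ (the $p=2$ statement of Theorem~$2$, applied to mixed functions), and a collar/transverse framing of $F$ inside $S^{n+m-1}$ can be built from the framings of $F_1 \subset S^{n-1}$ and $F_2 \subset S^{m-1}$ using the standard join coordinates $[\,\xx, s, \yy\,]$. Given cycles $a_1 \times a_2$-type product cycles representing generators $\alpha_1 \otimes \alpha_2$ of $\tilde H_k(F)$, I would compute $\mathrm{link}\big((a_1 * a_2)^+,\, b_1 * b_2\big)$ in $S^{n+m-1}$ by a linking-number-in-a-join computation: this is precisely the calculation carried out in \cite{KN} in the real-analytic tame isolated case, and since all the objects here are already realized as submanifolds of spheres with the join structure, that computation applies verbatim and yields the factor $(-1)^{nm}$ together with the tensor product $L_{K_{f_1}} \otimes L_{K_{f_2}}$. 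The sign $(-1)^{nm}$ arises from the orientation bookkeeping when one expresses the join orientation and the transverse normal direction of $F_1 * F_2$ in terms of those of $F_1$ and $F_2$, exactly as in the holomorphic Sebastiani--Thom / Kauffman--Neumann sign.

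Concretely the steps are: (1) verify $f$ has an isolated singularity and $K_f$ is a simple fibered link, using Corollary~$1$ and the connectivity of joins; (2) identify the middle homology of $F$ with $\tilde H_{k_1}(F_1) \otimes \tilde H_{k_2}(F_2)$ via the join homology formula, choosing product cycles as a basis; (3) use the $p=2$ part of Theorem~$2$ to realize the open book of $K_f$ as the join of the open books of $K_{f_1}$ and $K_{f_2}$, and transport the transverse framing accordingly; (4) invoke the linking-number computation of \cite{KN} for joins to evaluate $L_{K_f}$ on product cycles, obtaining $(-1)^{nm} L_{K_{f_1}}(\alpha_1,\beta_1)\,L_{K_{f_2}}(\alpha_2,\beta_2)$; (5) conclude the congruence of bilinear forms over $\Bbb{Z}$.

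\textbf{Main obstacle.} The delicate part is step~(3)--(4): making rigorous the claim that the \emph{geometric} framed fibered link $K_f \subset S^{n+m-1}$ (not merely its homotopy type) decomposes as a join compatibly with the transverse vector fields defining the Seifert pushoff. Theorem~$2$ gives a fiber homotopy equivalence of the \emph{tubular} Milnor fibration to the join, and condition (iii) upgrades this to the spherical fibration up to fiber homotopy; but the Seifert form, while a homotopy invariant of the fibered link in the simple case, is computed from linking numbers that a priori need an honest embedding with its normal framing. I would handle this by first establishing that for simple fibered links the Seifert pairing depends only on the fiber-homotopy class of the open book (as is implicit in \cite{KN}), so that it suffices to exhibit \emph{one} model of the open book of $K_f$ that is literally a join of the models for $K_{f_1}, K_{f_2}$ — and such a model is furnished by the explicit maps $\Psi$ and $\gamma_\theta$ of the proof of Theorem~$2$, which are built precisely from the join coordinates. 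Once that reduction is in place, the sign and tensor-product structure follow from the \cite{KN} calculation without further complication.
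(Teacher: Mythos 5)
The paper does not actually derive Proposition~1 from Theorem~2: it is quoted wholesale from Kauffman--Neumann \cite{KN}, whose results supply both (a) the geometric statement that, under a fibration hypothesis of type (iii), the fibered link $K_f \subset S^{n+m-1}$ together with its open book and framing is the product (join) of the fibered links $K_{f_1} \subset S^{n-1}$ and $K_{f_2} \subset S^{m-1}$ built along $S^{n+m-1} = S^{n-1} * S^{m-1}$, and (b) the linking-number computation of the Seifert pairing of such a product. Your plan instead tries to manufacture the geometric join structure out of Theorem~2 and Corollary~1, and that is exactly where there is a genuine gap, at your steps (3)--(4); steps (1), (2) and (5) are unobjectionable.

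Theorem~2 and Corollary~1 give only homotopy equivalences (and, for $p=2$, a fiber homotopy equivalence) between the Milnor fibration of $f$ and a join model; they say nothing about how the fiber of $f$ and its transverse pushoff actually sit in $S^{n+m-1}$, which is what the linking numbers defining $L_{K_f}$ require. Neither of your proposed repairs closes this. First, the maps $\Psi$ and $\gamma_\theta$ from the proof of Theorem~2 are defined on the quotient spaces $\pi(F_1^{-1}(J))$ and $\pi(E)$, obtained by collapsing $V(f_1)$ and $V(f_2)$ inside the tube $U_1 \times U_2$; they are not embeddings into the sphere and realize the join only up to (fiber) homotopy equivalence, so they do not exhibit a literal join model of the open book of $K_f$ in $S^{n+m-1}$. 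Second, the claimed reduction that the Seifert pairing of a simple fibered link depends only on the fiber-homotopy class of the open book is unproven and is not implicit in \cite{KN}: the Seifert form is computed as $\langle \mathrm{Var}^{-1}(\cdot), \cdot \rangle$, where the variation map uses the geometric monodromy relative to the boundary and the intersection pairing of the embedded fiber, data which a free fiber homotopy equivalence of the fibrations of the exteriors does not control (from the homotopy monodromy $h_*$ and the intersection form alone one cannot in general recover $L$, for instance when $\mathrm{Id} - h_*$ is not invertible). The missing ingredient is precisely Kauffman--Neumann's geometric ``sum of singularities'' theorem, i.e.\ an isotopy of $K_f$ with its open book and framing onto the product $K_{f_1} \otimes K_{f_2}$; the paper's proof consists of invoking that result, whereas your argument would still have to prove it.
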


Kauffman and Neumann studied Seifert forms of non-simple fibered links. 
See \cite{KN}.

Let $A = (a_{i,j})$ and $A'$ be integral unimodular matrices. We say that $A'$ is an \textit{extension of $A$} 
if $A'$ is congruent to 

\[     \left(
        \begin{array}{@{\,}ccc|c@{\,}}
        a_{1,1} & \ldots & a_{1,n} & 0 \\ 
        \vdots & \vdots & \vdots & \vdots \\
        a_{n,1} & \ldots & a_{n,n} & 0 \\ \hline
        b_1    & \ldots & b_n    & \varepsilon 
        \end{array}
       \right),  \]
where 
$n$ is the rank of $A$, $b_i \in \Bbb{Z}, i= 1, \dots, n$ and 
$\varepsilon = \pm 1$. 
Let $K$ and $K'$ be simple fibered links in  $S^{2k+1}$. 
Set $F$ and $F'$ to be the fiber surfaces of $K$ and $K'$ respectively. 
If a fiber surface $F$ is obtained from $F'$ by a plumbing of a Hopf band, 
the Seifert form of $F$ is an extension of the Seifert form of $F'$ (cf. \cite{L}). 
If $k \geq 3$, the fiber surface is a positive Hopf band 
(resp. a negative Hopf band) if and only if 
its Seifert form is $(+1)$ (resp. $(-1)$). 
If a fiber surface is obtained from a disk by successive plumbings of Hopf bands 
then its Seifert form becomes a unimodular lower triangular matrix 
for a suitable choice of the basis. 
D. Lines studied high dimensional fibered knots by using plumbings \cite{L}. 

\begin{lemma}[\cite{L}]
Let $F$ and $F'$ be the fiber surfaces of simple fibered links $K$ and $K'$ in $S^{2k+1}$, 
where $k \geq 3$. 
Then $F'$ is obtained from $F$ by plumbing a Hopf band if and only if 
$L_{K'}$ is an extension of $L_{K}$. 
\end{lemma}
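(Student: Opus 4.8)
The plan is to prove the two implications separately, with the reverse implication relying on the high-dimensional classification of simple fibered links by their Seifert forms.

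For the forward direction, suppose $F'$ is obtained from $F$ by plumbing a Hopf band $H_{\varepsilon}$ with $\varepsilon = \pm 1$, along a properly embedded $k$-disk $D \subset F$ and a core $k$-disk of $H_{\varepsilon}$. I would decompose $F' = F \cup_{D^{k} \times D^{k}} H_{\varepsilon}$, where $D^{k} \times D^{k}$ is the plumbing region, contractible and embedded in both pieces, and apply Mayer--Vietoris to obtain $\tilde{H}_{k}(F'; \Bbb{Z}) \cong \tilde{H}_{k}(F; \Bbb{Z}) \oplus \Bbb{Z}\langle e \rangle$, where $e$ is the class of the core $k$-sphere of the band. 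I would then read off the Seifert form: $L_{K'}(e, e) = \varepsilon$ (the self-linking of the core of a Hopf band), $L_{K'}$ restricted to $\tilde{H}_{k}(F; \Bbb{Z})$ equals $L_{K}$ (cycles and their transverse push-offs can be isotoped off the plumbing region), and, with the standard orientation conventions, $L_{K'}(\alpha_{i}, e) = 0$ while $L_{K'}(e, \alpha_{i}) = b_{i}$ for a basis $\alpha_{1}, \dots, \alpha_{n}$ of $\tilde{H}_{k}(F; \Bbb{Z})$, where $b_{i}$ records how $D$ meets the cycle $\alpha_{i}$ inside $F$. This displays $L_{K'}$ in the block-triangular shape of an extension of $L_{K}$.

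For the reverse direction --- the main part --- suppose $L_{K'}$ is congruent to the extension of $L_{K}$ with corner $\varepsilon$, last row $(b_{1}, \dots, b_{n}, \varepsilon)$, and last column $(0, \dots, 0, \varepsilon)^{T}$. I would first build a geometric model: plumb the Hopf band $H_{\varepsilon}$ onto $F$ along a properly embedded $k$-disk $D \subset F$ whose linking pattern with a fixed basis of cycles of $\tilde{H}_{k}(F; \Bbb{Z})$ realizes exactly the prescribed integers $b_{i}$. This is where the hypothesis $k \geq 3$ enters essentially: since $2k \geq 6$, general position together with finger-move and tubing arguments let one prescribe these linking numbers freely, which fails in low dimensions. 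Call the resulting link $K''$ with fiber surface $F''$. Since a plumbing of open books is again an open book and preserves $(k-1)$-connectivity of the fiber (again by Mayer--Vietoris, with $\pi_{1}$ handled by van Kampen in the high-dimensional range), $K''$ is a simple fibered link in $S^{2k+1}$, and by the forward direction $L_{K''}$ is the prescribed extension, so $L_{K''} \cong L_{K'}$.

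Finally I would invoke the classification of simple fibered links in $S^{2k+1}$ for $k \geq 3$: such a link is determined up to isotopy by the congruence class of its Seifert form (Durfee, Kato; compare Kervaire and Levine). Hence $K''$ and $K'$ are isotopic, so their fiber surfaces are ambient isotopic; since $F''$ was constructed as a Hopf-band plumbing on $F$, so is $F'$, which finishes the proof. The hard part is the reverse direction, and within it the two delicate points are the explicit realization of the off-diagonal data $b_{i}$ by an honestly embedded plumbing disk and the appeal to the high-dimensional classification theorem --- both of which are precisely why $k \geq 3$ cannot be weakened.
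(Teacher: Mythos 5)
This lemma is not proved in the paper at all---it is quoted directly from Lines \cite{L}---and your proposal follows essentially the route of that source: the forward implication by a Mayer--Vietoris and Seifert-linking computation for the plumbed fiber, and the converse by realizing the prescribed extension through a plumbing of an $\varepsilon$-Hopf band on $F$ and then invoking the classification of simple fibered links in $S^{2k+1}$, $k \geq 3$, by the congruence class of their Seifert forms (Durfee \cite{Du}, Kato). Your outline is correct, and the one step that genuinely needs the detail you only sketch is the one you flag yourself: choosing the plumbing disk so that the new core sphere realizes the prescribed integers $b_{1}, \dots, b_{n}$ (this cannot be sidestepped, since different choices of $b$ can yield non-congruent extensions), which is exactly where $k \geq 3$ and the high-dimensional handle/general-position arguments are used.
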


\begin{Example}
Suppose that $\alpha_{j} \neq \alpha_{j'} \ (j \neq j')$ and $m \geq 2$. 
Then we define a mixed polynomial and a complex polynomial as follows: 
\[
 f_{1}(\zz) =  (z_{1} + \alpha_{1}z_{2})(z_{1} + \alpha_{2}z_{2}) 
\overline{(z_{1} + \alpha_{3}z_{2})}, \ \ 
f_{2}(\ww) = \textstyle\sum_{j=1}^{m} w_{j}^{2}. 
\]
By \cite{In2}, the Seifert form of $K_{f_{1}}$ is equal to 
\[
\begin{pmatrix}
  0 & -1 \\   
  -1 & 2
\end{pmatrix}.
\]
Since the Seifert form of $K_{f_{2}}$ is equal to $(-1)^{\frac{m(m-1)}{2}}$ \cite[Proposition 2.2]{Du}, 
by Proposition $1$, the Seifert form of $K_f$ is equal to that of $K_{f_{1}}$, where 
$f = f_{1} + f_{2}$. 
Then the Seifert form of $K_f$ satisfies 
\begin{equation*}
\begin{split}
&(-1)^{\frac{m(m-1)}{2}}\begin{pmatrix}
  0 & -1 \\   
  -1 & 2
\end{pmatrix}
\rightarrow (-1)^{\frac{m(m-1)}{2}}\begin{pmatrix}
         0 & 1\\
         1  & 2 
         \end{pmatrix} \\
    \rightarrow  &(-1)^{\frac{m(m-1)}{2}}\begin{pmatrix}
         0 & 1 & 0\\
         1 & 2 & 0\\
         0 & 0 & 1 
         \end{pmatrix}
 \rightarrow  (-1)^{\frac{m(m-1)}{2}}\begin{pmatrix}
         1 & 0 & 0\\
         0 & 1 & 0\\
         0 & 0 & -1 
         \end{pmatrix}.
\end{split}
\end{equation*}
See the proof of \cite[Lemma 6]{L}. 
The links $K_{f_{1}}$ and $K_{f_{2}}$ are simple fibered links. 
By Corollary~1, $K_f$ is also a simple fibered link. 
By Lemma $8$, 
the Milnor fiber of $f$ is obtained from a disk by 
plumbing three Hopf bands 
and deplumbing a Hopf band. 
\end{Example}

By using the notion of strongly non-degenerate mixed functions and Proposition $1$, 
we show a generalization of \cite[Corollary 3]{S1}.

\begin{corollary}
Let $f_{j}(z_{j})$ be a strongly non-degenerate mixed polynomial of $1$-variable $z_j$ for $j = 1, \dots, n$. 
Set $m_{j}$ to be the mapping degree of $f_{j}/\lvert f_{j}\rvert : 
S^{1}_{\varepsilon_{j}} = \{ z_{j} \in \Bbb{C} \mid \lvert z_{j}\rvert = \varepsilon_{j} \}
\rightarrow S^{1}$ and 
\[
g_{j}(z_{j}) = \begin{cases}
           z_{j}^{m_{j} + \ell_{j}}\bar{z}_{j}^{\ell_{j}} & m_{j} > 0 \\
           z_{j}^{\ell_{j}}\bar{z}_{j}^{-m_{j} + \ell_{j}} & m_{j} < 0 
           \end{cases},
\]
where $0 < \varepsilon_{j} \ll 1$ for $j = 1, \dots, n$. 
Suppose that the Newton boundary of $f_j$ is equal to that of $g_j$ for $j = 1, \dots, n$. 
For any $j \in \{1, \dots, n\}$, assume that there exists 
an analytic family $f_{j, t}$ of strongly non-degenerate mixed polynomials 
such that $f_{j, 0} = f_{j}, f_{j, 1} = g_{j}$ and 
the Newton boundaries of $f_{j,t}$ is constant for $0 \leq t \leq 1$. 
Then the Milnor fibration of $f(\zz) = f_{1}(z_{1}) + \cdots + f_{n}(z_{n})$ is equivalent to 
that of $g(\zz) = g_{1}(z_{1}) + \cdots + g_{n}(z_{n})$.
Set the $(\lvert m\rvert -1) \times (\lvert m\rvert -1)$ matrix $\Lambda'_{m}$ as follows: 
\[
\Lambda'_{m} = \begin{cases}
            \Lambda_{m} & m > 0 \\
            {}^{t}\Lambda_{m} & m < 0 
           \end{cases},
\]
where $\Lambda_{m}$ is the $(m-1) \times (m-1)$ matrix given by 
\[
\Lambda_{m} = \left(
        \begin{array}{@{\,}cccccccc@{\,}}
1 & 0 & \ldots & \ldots & 0 \\
-1 & 1 & \ddots & \ddots & \vdots \\ 
0 & \ddots & \ddots & \ddots & \vdots \\
\vdots & \ddots & \ddots & \ddots & 0 \\
0 & \ldots & 0 & -1 & 1
\end{array}
       \right).
\]
Then we have 
\[
\Gamma_{K_{f}} \cong (-1)^{\frac{n(n+1)}{2}}\Lambda'_{m_{1}} \otimes \cdots \otimes \Lambda'_{m_{n}}. 
\]
\end{corollary}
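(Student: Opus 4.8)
The plan is to reduce the statement about $\Gamma_{K_f}$ to an iterated application of Proposition~1 together with the one-variable computation for $g_j$. First I would observe that, by the Eyral–Oka result quoted at the end of Section~2.3 (an analytic family of convenient strongly non-degenerate mixed polynomials with constant Newton boundary has equivalent tubular Milnor fibrations), the hypothesis that each $f_j$ deforms to $g_j$ through such a family gives that the tubular Milnor fibration of $f = f_1 + \dots + f_n$ is equivalent to that of $g = g_1 + \dots + g_n$; one must check that the deformation $f_{j,t}$ induces a deformation of the sum with constant Newton boundary, which follows because the Newton boundary of a join-type sum is the union of the Newton boundaries of the summands (supported on disjoint coordinate subspaces), and convenience and strong non-degeneracy are preserved under summation of independent variables by Lemma~2 and Theorem~3. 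In particular the Seifert forms agree: $\Gamma_{K_f} \cong \Gamma_{K_g}$.

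Next I would compute $\Gamma_{K_g}$ by induction on $n$ using Proposition~1. For $n=1$, $g_1(z_1) = z_1^{m_1+\ell_1}\bar z_1^{\ell_1}$ (or its conjugate type when $m_1<0$); its link $K_{g_1}$ is a simple fibered link in $S^1 \subset S^3$ — in fact a torus link / iterated Hopf plumbing whose fiber surface is $(|m_1|-1)$-strand — and its Seifert form is $\Lambda'_{m_1}$ up to the sign $(-1)^{1}$ coming from the $n(n+1)/2$ convention; this base case is essentially \cite[Corollary 3]{S1} or a direct linking-number computation on the Milnor fiber of $z^a\bar z^b$. Then, writing $g = (g_1 + \dots + g_{n-1}) + g_n$, Proposition~1 gives $\Gamma_{K_g} \cong (-1)^{(\dim_1)(\dim_2)}\Gamma_{K_{g_1+\dots+g_{n-1}}} \otimes \Gamma_{K_{g_n}}$, where $\dim_1, \dim_2$ are the ambient real dimensions (here $2(n-1)$ and $2$, so the exponent is even and the sign is $+1$ at each step — though I should track signs carefully against the stated $(-1)^{n(n+1)/2}$, which presumably is bookkeeping for the base-case signs accumulated over the $n$ factors). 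Iterating yields $\Gamma_{K_g} \cong \pm\, \Lambda'_{m_1} \otimes \cdots \otimes \Lambda'_{m_n}$ with the claimed sign.

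The main obstacle I expect is verifying the hypotheses of Proposition~1 at each inductive step, namely that $g_1 + \dots + g_k$ (for each intermediate $k$) has an \emph{isolated} singularity at the origin and that $K_{g_1+\dots+g_k}$ is a \emph{simple} fibered link — i.e.\ $(k-2)$-connected fiber and $(k-1)$-connected complement in the appropriate sphere. Connectivity of the join follows from Theorem~2 and the Milnor formula in Section~3: the fiber of $g_1+\dots+g_k$ is homotopy equivalent to the iterated join $F_1 * \dots * F_k$ of the one-variable fibers, each of which is a finite wedge of circles (hence $0$-connected, $1$-dimensional), so the $k$-fold join is $(k-2)$-connected and has the homotopy type of a wedge of $(k-1)$-spheres; isolatedness of the singularity of the sum follows from Theorem~3 (convenient strongly non-degenerate) once one knows $g_1+\dots+g_k$ is again convenient and strongly non-degenerate, which is the join-type stability already invoked. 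The remaining care is purely in sign conventions: matching the $(-1)^{nm}$ of Proposition~1, the transpose in $\Lambda'_m$ for $m<0$, and the $(-1)^{n(n+1)/2}$ in the final formula, which I would handle by a clean induction recording the sign as $\prod_j (-1)^{(\text{base sign for } j)} \cdot \prod_{k}(-1)^{(2(k-1))(2)}$ and checking it telescopes to $(-1)^{n(n+1)/2}$.
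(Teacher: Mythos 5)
Your overall strategy coincides with the paper's: reduce to $g$ via the Eyral--Oka theorem on families with constant Newton boundary, then obtain $\Gamma_{K_g}$ from Proposition 1 together with the one-variable computation (the paper cites Sakamoto's Corollary 3 in \cite{S1} for the resulting tensor formula); deforming the sum $\sum_j f_{j,t}$ directly is a legitimate, arguably cleaner, way to do the first step, since the Newton boundary of the join-type sum is the simplex spanned by the vertices of the one-variable boundaries. But two concrete points in your sketch are wrong. First, the Milnor fiber of a one-variable mixed polynomial $g_j=z^{a}\bar z^{b}$ is a finite set of $\lvert a-b\rvert=\lvert m_j\rvert$ points (its link in $S^{1}$ is empty); it is not a wedge of circles, and $K_{g_1}$ is not a torus link in $S^{3}$. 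Your conclusion that the $k$-fold join is a wedge of $(k-1)$-spheres, hence $(k-2)$-connected, is correct, but only because the factors are nonempty discrete sets; with your stated premise ($0$-connected, $1$-dimensional factors) the join would be of the wrong dimension and far more highly connected, so the justification as written does not support the simple-fibered-link hypothesis you need.

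Second, and more seriously for the final formula, the exponent in Proposition 1 is $nm$ with $n,m$ the numbers of \emph{complex variables} of the two summands, not the ambient real dimensions. Hence the inductive step joining $g_1+\cdots+g_{k-1}$ with $g_k$ carries the sign $(-1)^{k-1}$, not $+1$; these steps accumulate to $(-1)^{n(n-1)/2}$, and together with the base-case Seifert form $-\Lambda'_{m_j}$ of each one-variable factor (total $(-1)^{n}$, as in Sakamoto's formula for $n=1$) this yields exactly $(-1)^{n(n+1)/2}$. Under your reading (step sign always $+1$) the total sign would be $(-1)^{n}$, which already contradicts the claimed $(-1)^{n(n+1)/2}$ for $n=2$. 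So the sign issue you flagged is not a matter of convention-matching that "telescopes" automatically: it is resolved only by using the complex-variable count in Proposition 1, and your proposed bookkeeping formula would fail.
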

\begin{proof}
Since $f_{j, 0}$ is strongly non-degenerate, the tubular Milnor fibration of $f_t$ is equivalent 
to that of $f_{0}$ for $0 \leq t \leq 1$ \cite[Theorem 3.14]{EO}. 
Thus the tubular Milnor fibrations of $f_j$ is equivalent to that of $g_j$ for $j = 1, \dots, n$. 
By \cite{O1} and Corollary $1$, the spherical Milnor fibration of $f$ are equivalent to that of $g$. 
By Proposition $1$ and \cite[Corollary 3]{S1}, the Seifert form $\Gamma_{K_{g}}$ is congruent to 
\[
(-1)^{\frac{n(n+1)}{2}}\Lambda'_{m_{1}} \otimes \cdots \otimes \Lambda'_{m_{n}}.
\]
This completes the proof. 
\end{proof}

\section{Enhanced Milnor numbers of simple links defined by mixed functions}
Let $K$ be a fibered link in $S^{2k+1}$. By gluing $\xi_{0}$ and $\xi_{1}$, 
we give a piecewise smooth map $\xi : S^{2k+1} \rightarrow D^{2}$. 
By \cite{KN}, $\xi$ can be extended to a continuous map $\Xi : B^{2k+2} \rightarrow D^{2}$  
which is a smooth submersion except at $\mathbf{0}_{2}$ and a corner along $\partial N(K)$. 
Then we consider the following map: 
\[
B^{2k+2} \setminus \{\mathbf{0}_{2k+2}\} \rightarrow G(2k, 2k+2), \ \ 
\xx \mapsto \ker D(\Xi(\xx)), 
\]
where $D(\Xi(\xx))$ is the differential of $\Xi$ at $\xx$ and 
$G(2k, 2k+2)$ is the Grassman manifold of oriented $2k$-planes in $\Bbb{R}^{2k+2}$. 
This map defines an element of $\pi_{2k+1}(G(2k, 2k+2))$. 
Note that $\pi_{2k+1}(G(2k, 2k+2))$ is isomorphic to  
\[
\begin{cases}
\Bbb{Z} \oplus \Bbb{Z} & k=1 \\
\Bbb{Z} \oplus \Bbb{Z}/2\Bbb{Z} & k > 1
\end{cases}.
\]
The homotopy class of $\Xi$ has the form $((-1)^{k+1}\mu(K), \lambda(K))$. 
This pair $((-1)^{k+1}\mu(K), \lambda(K))$ is called the \textit{enhanced Milnor number of $K$} and 
$\lambda(K)$ is called the \textit{enhancement to the Milnor number}. See \cite{NR1, NR2, NR3}. 
Note that if $K$ is a fibered link coming from an isolated singularity of a complex hypersurface, 
$\lambda(K)$ always vanishes. 
By \cite{NR2}, we have 

\begin{theorem}[\cite{NR2}]
Let $f_{1} : (\Bbb{C}^{n}, O_{n}) \rightarrow (\Bbb{C}, 0)$ and  
$f_{2} : (\Bbb{C}^{m}, O_{m}) \rightarrow (\Bbb{C}, 0)$ be mixed function germs of independent variables. 
Assume that $f_{1}, f_{2}$ and $f$ satisfy the condition (1). 
Suppose that $O_n$ and $O_m$ are isolated singularities of $f_1$ and $f_2$.  
Then $\mu(K_{f}) = \mu(K_{f_{1}})\mu(K_{f_{2}})$ and 
$\lambda(K_{f}) \equiv \lambda(K_{f_{1}})\mu(K_{f_{2}}) + \mu(K_{f_{1}})\lambda(K_{f_{2}}) \bmod 2$. 
\end{theorem}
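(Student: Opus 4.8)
The plan is to derive the statement for $f = f_1 + f_2$ from the join description of its spherical Milnor fibration established in Corollary $1$, together with the additivity/multiplicativity properties of the enhanced Milnor number under the join operation, which are precisely what Neumann and Rudolph computed in \cite{NR2}. First I would verify that the hypotheses put us in a situation where the enhanced Milnor number is defined: since $O_n$ and $O_m$ are isolated singularities of $f_1$ and $f_2$ and condition (1) holds, each $f_j$ defines a fibered link $K_{f_j}$ in $S^{2n-1}$ (resp. $S^{2m-1}$), and by Lemma $2$ together with \cite[Proposition 5.2]{ACT} the sum $f$ satisfies condition (1) as well, so $O_{n+m}$ is an isolated singularity of $f$ and $K_f$ is a fibered link in $S^{2(n+m)-1}$. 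Thus $\mu(K_f)$ and $\lambda(K_f)$ are well defined, and similarly for $f_1$ and $f_2$.

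Next I would invoke Theorem $2$ (and Corollary $1$): the fiber surface $F$ of the spherical Milnor fibration of $f$ is homotopy equivalent to the join $F_1 * F_2$ of the fiber surfaces of $f_1$ and $f_2$, and when $p = 2$ the monodromy of $f$ is, up to homotopy, the join $h_1 * h_2$ of the monodromies of $f_1$ and $f_2$. This identifies the open book / fibered link $K_f$, up to the appropriate equivalence, with the join of the fibered links $K_{f_1}$ and $K_{f_2}$. The enhanced Milnor number $((-1)^{k+1}\mu(K), \lambda(K))$ is an invariant of the homotopy type of the fibered link (it is read off from the homotopy class of the plane-field map $\Xi : B^{2k+2} \to D^2$ built from the open book), so it depends only on this join data. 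The multiplicativity $\mu(K_f) = \mu(K_{f_1})\mu(K_{f_2})$ is immediate from the Künneth-type formula for the reduced homology of a join recorded just before Corollary $3$, since $\mu$ is the rank of the middle homology of the fiber. For $\lambda$, I would cite the computation of Neumann and Rudolph for the enhancement of a join of fibrations: the plane field of the join fibration is, in their framework, obtained by a join construction on the defining plane fields, and tracking the $\Bbb{Z}/2\Bbb{Z}$-component through this construction gives exactly $\lambda(K_f) \equiv \lambda(K_{f_1})\mu(K_{f_2}) + \mu(K_{f_1})\lambda(K_{f_2}) \bmod 2$.

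The main obstacle is the passage from ``homotopy equivalence of fiber surfaces plus homotopy equivalence of monodromies'' to ``the plane-field invariant of $K_f$ equals the one predicted by the join formula.'' The enhanced Milnor number is defined via the germ of the piecewise-smooth map $\Xi$ near the cone point, and a priori two fibrations with homotopy-equivalent fibers and homotopic monodromies still need their associated open books (and hence their $\Xi$'s) to be compared carefully: one must check that the join fibration produced in Section $3$ is genuinely fiber homotopy equivalent to the Milnor fibration of $f$ as a map to $S^1$, which is exactly what the argument at the end of the proof of Theorem $2$ (using \cite{D}) provides for $p=2$. Granting that, the remaining work is purely the Neumann--Rudolph plane-field bookkeeping for joins, which is \cite{NR2}, so the theorem follows by directly quoting their formula applied to the join decomposition of $K_f$. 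I would therefore structure the proof as: (1) well-definedness via condition (1) and \cite{ACT}; (2) $K_f \simeq K_{f_1} * K_{f_2}$ via Theorem $2$ / Corollary $1$; (3) multiplicativity of $\mu$ via the join homology formula; (4) the $\lambda$ formula by citing \cite{NR2}.
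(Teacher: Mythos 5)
The key thing to note is that the paper does not prove this statement at all: it is Theorem $4$, imported from Neumann--Rudolph, and the paper's entire justification is the sentence ``By \cite{NR2}, we have\dots''. Your proposal, in the end, also defers the only hard part (the mod $2$ formula for $\lambda$) to \cite{NR2}, so what you add is the scaffolding in steps (1)--(3) plus the bridge in step (2); and it is precisely that bridge which has a genuine gap. Theorem $2$ and Corollary $1$ are purely homotopy-theoretic statements: they give a homotopy equivalence of the Milnor fiber of $f$ with the join $F_1 * F_2$, an identification of the monodromy only up to homotopy, and (via \cite{D}) a fiber homotopy equivalence of fibrations. The enhanced Milnor number is not an invariant of this data. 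It is defined from the homotopy class of the plane-field map $\mathbf{x} \mapsto \ker D(\Xi(\mathbf{x}))$ attached to the genuine open book in $S^{2k+1}$, i.e.\ from the smooth embedded structure, and $\lambda$ is exactly the part of that invariant which the homotopy type of the fiber and the homotopy class of the monodromy do not see: already in $S^{3}$ the positive and the negative Hopf band have fiber homotopy equivalent complement fibrations (annulus fiber, monodromy freely homotopic to the identity) but different enhancement $\lambda$. So ``the Milnor fibration of $f$ is fiber homotopy equivalent to the join model'' does not entitle you to compute $\lambda(K_f)$ by join bookkeeping; the argument you flag as the ``main obstacle'' is not resolved by the use of \cite{D} at the end of the proof of Theorem $2$, because that step only controls homotopy-level data, not the plane field.

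What Neumann--Rudolph actually use is the Kauffman--Neumann theorem \cite{KN} that, under the relevant hypotheses, $K_f$ \emph{is} the product (join) of the fibered links $K_{f_1}$ and $K_{f_2}$ as an open book in the sphere, and they then track $\ker D\Xi$ through that geometric construction; your step (2) delivers strictly weaker information than this, so step (4) does not follow from it. The part of your argument that is genuinely fine at the homotopy level is the multiplicativity of $\mu$, since in the isolated-singularity situation $\mu$ is read off from the middle homology (equivalently the reduced Euler characteristic) of the fiber and the join/K\"unneth formula quoted before Corollary $3$ applies. For $\lambda$, the honest options are the one the paper takes --- quote \cite{NR2} directly, with the geometric join decomposition of $K_f$ from \cite{KN} as the input --- or to redo the Neumann--Rudolph plane-field computation on that geometric join; routing it through the paper's homotopy join theorem cannot work.
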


For any $\ell \in \Bbb{N}$ and $k \geq 2$, 
there exists a $(k+1)$-variables Brieskorn polynomial $P$ such that 
$((-1)^{k+1}\mu(K_{P}), \lambda(K_P)) = ((-1)^{k+1}\ell, 0)$. See \cite{M2}. 
By Theorem $4$ and \cite{In1}, we calculate the enhanced Milnor numbers of simple fibered links 
defined by mixed polynomials of join type. Then we have 

\begin{theorem}
For any $\ell \in \Bbb{N}$, 
there exists a $(k+1)$-variables mixed polynomial $P = P_{1} + P_{2}$ of join type such that
$P_{1}, P_{2}$ and $P$ satisfies the condition (1) and 
$K_P$ is a simple fibered link which satisfies 
$((-1)^{k+1}\mu(K_{P}), \lambda(K_P)) = ((-1)^{k+1}\ell, 1)$, where $k \geq 2$. 
\end{theorem}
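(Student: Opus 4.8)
The plan is to construct $P$ explicitly as a join of two mixed polynomials whose enhancement behaviour can be controlled separately, and then to invoke Theorem~$4$ to compute the enhanced Milnor number of the join. First I would recall from \cite{In1} that there exists a $2$-variable mixed polynomial $P_{1}$ (so $k=1$ in that context) whose link $K_{P_{1}}$ in $S^{3}$ is a simple fibered link with enhancement $\lambda(K_{P_{1}}) = 1$ and with Milnor number $\mu(K_{P_{1}})$ some prescribed odd positive integer $\mu_{1}$; the point is that \cite{In1} produces mixed polynomials realising \emph{any} integer value of the enhancement to the Milnor number of a fibered link in $S^{3}$, so in particular $\lambda = 1$ is attainable, and one may arrange $\mu_{1}$ odd by choosing the example appropriately. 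Second, for the remaining $k-1$ variables I would take a Brieskorn-type complex polynomial $P_{2}(w_{1},\dots,w_{k-1})$, e.g. a sum of squares $\sum w_{j}^{2}$ or more generally a Brieskorn polynomial, chosen so that $K_{P_{2}}$ is a simple fibered link in $S^{2(k-1)-1}=S^{2k-3}$ with $\mu(K_{P_{2}}) = \mu_{2}$ an \emph{odd} positive integer and, since $P_{2}$ is complex, $\lambda(K_{P_{2}}) = 0$; using the classification in \cite{M2} one can realise any Milnor number, and sums of squares give $\mu_{2}=1$.

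Next I would verify that the hypotheses needed to apply the earlier results hold: $P_{1}$ and $P_{2}$ are mixed function germs of independent variables, each has an isolated singularity at the origin (this is part of what \cite{In1} and \cite{M2} provide), so condition (i)---i.e. condition (1) in the statement of Theorem~$4$---holds for $P_{1}$, $P_{2}$, and by Lemma~$2$ (together with \cite{ACT}) also for $P = P_{1}+P_{2}$. Moreover $K_{P_{1}}$ and $K_{P_{2}}$ are simple fibered links by construction, so by Corollary~$1$ the fiber of the spherical Milnor fibration of $P$ is homotopy equivalent to the join $F_{1}*F_{2}$, and the standard connectivity estimates for joins (used already in Section~$2.2$, via \cite{M}) show that $F_{1}*F_{2}$ is $(k-1)$-connected and that $K_{P}$ is $(k-2)$-connected; hence $K_{P}$ is again a simple fibered link in $S^{2k+1}$. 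Then Theorem~$4$ applies directly and gives
\[
\mu(K_{P}) = \mu(K_{P_{1}})\,\mu(K_{P_{2}}) = \mu_{1}\mu_{2},
\qquad
\lambda(K_{P}) \equiv \lambda(K_{P_{1}})\mu(K_{P_{2}}) + \mu(K_{P_{1}})\lambda(K_{P_{2}}) \bmod 2.
\]
Since $\lambda(K_{P_{1}}) = 1$, $\mu(K_{P_{2}}) = \mu_{2}$ is odd, and $\lambda(K_{P_{2}}) = 0$, the right-hand side is $\equiv 1 \cdot \text{(odd)} + \mu_{1}\cdot 0 \equiv 1 \bmod 2$, so $\lambda(K_{P}) = 1$. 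To hit a \emph{given} target $\ell \in \Bbb{N}$ for $\mu(K_{P})$, I would choose $\mu_{2} = 1$ (e.g.\ $P_{2} = \sum_{j=1}^{k-1} w_{j}^{2}$, whose Milnor fiber is homotopy equivalent to $S^{k-1}$ with $\mu = 1$, and the parity is right) and choose $P_{1}$ from \cite{In1} with $\mu_{1} = \ell$; if the construction in \cite{In1} forces $\mu_{1}$ to be of a restricted form, one can instead allow $P_{2}$ to be a Brieskorn polynomial with $\mu_{2}$ an appropriate odd factor and absorb the rest into $\mu_{1}$, so that $\mu_{1}\mu_{2} = \ell$ with $\mu_{2}$ odd. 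This yields a mixed polynomial $P = P_{1}+P_{2}$ of join type with $((-1)^{k+1}\mu(K_{P}), \lambda(K_{P})) = ((-1)^{k+1}\ell, 1)$, which is the assertion.

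The main obstacle I anticipate is the compatibility of the two realisation results: \cite{In1} controls $\lambda$ for links in $S^{3}$ but I also need simultaneous control of the parity of $\mu(K_{P_{1}})$, and I need to be sure that joining with $P_{2}$ does not destroy the ``simple fibered link'' property or the isolatedness of the singularity of $P$. The first issue is handled by a careful reading of \cite{In1} (and, if necessary, a parity adjustment by increasing the number of variables of $P_{2}$ or altering its exponents while keeping $\mu_{2}$ odd); the second is exactly what Corollary~$1$, Lemma~$2$, and the Mayer--Vietoris/join connectivity formula from \cite{M} are there to guarantee, so once those are invoked the remaining arithmetic is routine. A secondary point to check is that the enhancement $\lambda$ is genuinely well-defined on the homotopy type of the fibered link (so that the homotopy equivalence from Corollary~$1$ transports it correctly), but this is built into the definitions from \cite{NR1, NR2, NR3} recalled at the start of this section and into Theorem~$4$ itself.
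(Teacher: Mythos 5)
Your overall strategy coincides with the paper's: take $P_{1}$ a two-variable mixed polynomial with $\lambda(K_{P_{1}})=1$, take $P_{2}$ a complex Brieskorn polynomial in the remaining $k-1$ variables (so $\lambda(K_{P_{2}})=0$), use Corollary~1 to keep the join link simple fibered, and apply the product formula of Theorem~4 so that $\lambda(K_{P})\equiv\lambda(K_{P_{1}})\mu(K_{P_{2}})\equiv\mu(K_{P_{2}})\bmod 2$. The genuine gap is in the input you take from \cite{In1}: you assume that for an arbitrary prescribed value (first ``any odd $\mu_{1}$'', later ``$\mu_{1}=\ell$'') there is a two-variable mixed polynomial with isolated singularity, simple fibered link, $\lambda=1$ and that prescribed Milnor number. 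Neither the paper nor the cited result supports this; what is quoted from \cite{In1} is only that every integer is realized as an enhancement $\lambda$, with no simultaneous control of $\mu$, and the concrete pairs actually available are $(\mu,\lambda)=(2p,1)$ for $f_{1}=(z_{1}^{p}+\alpha_{1}z_{2})(z_{1}^{p}+\alpha_{2}z_{2})\overline{(z_{1}^{p}+\alpha_{3}z_{2})}$ and $(\mu,\lambda)=(1,1)$ for $f_{2}=z_{1}^{2}+\bar{z}_{2}^{2}$. Your fallback (factor $\ell=\mu_{1}\mu_{2}$ with $\mu_{2}$ odd and ``absorb the rest into $\mu_{1}$'') does not close this: when $\ell$ is odd every admissible $\mu_{1}$ must be odd, while the $\lambda=1$ family with controllable Milnor number has $\mu_{1}=2p$ even; what you need at that point is exactly the example $z_{1}^{2}+\bar{z}_{2}^{2}$ with $(\mu,\lambda)=(1,1)$, which you never exhibit.

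The paper resolves precisely this issue by a parity split: for $\ell$ even it takes $P_{1}=f_{1}$ with $p=\ell/2$ and $P_{2}=\sum_{i=1}^{m}w_{i}^{2}$ (so $\mu(K_{P_{2}})=1$, odd); for $\ell$ odd it takes $P_{1}=f_{2}=z_{1}^{2}+\bar{z}_{2}^{2}$ and $P_{2}=w_{1}^{\ell+1}+\sum_{i\geq 2}w_{i}^{2}$ (so $\mu(K_{P_{2}})=\ell$, odd). A further point you leave unverified: to invoke Corollary~1 and Theorem~4 you need the conditions (i)--(iii) for $P_{1}$, $P_{2}$ \emph{and} $P=P_{1}+P_{2}$; the paper gets all of this because its building blocks are explicit convenient strongly non-degenerate mixed polynomials (Theorem~3), whereas for your unspecified $P_{1}$ from \cite{In1} this must be checked rather than assumed. (A small slip besides: the Milnor fiber of $\sum_{j=1}^{k-1}w_{j}^{2}$ is homotopy equivalent to $S^{k-2}$, not $S^{k-1}$; this is harmless since only $\mu=1$ is used.)
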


\begin{proof}
We define a mixed polynomial and a complex polynomial as follows: 
\[
 f_{1}(\zz) =  (z_{1}^{p} + \alpha_{1}z_{2})(z_{1}^{p} + \alpha_{2}z_{2}) 
\overline{(z_{1}^{p} + \alpha_{3}z_{2})}, \ \ 
f_{2}(\zz) = z_{1}^{2} + \bar{z}_{2}^{2}, \ \
f_{3}(\ww) = \textstyle\sum_{i=1}^{m} w_{i}^{a_{i}}, 
\]
where $\alpha_{j} \neq \alpha_{j'} \ (j \neq j'), a_{j} \geq 2$ and $m \geq 1$. 
Then $f_j$ is a convenient strongly non-degenerate mixed polynomial
and $K_{f_{j}}$ is a simple fibered link for $j = 1, 2, 3$. 
By \cite{EN, In1, M2}, we have 
\begin{equation*}
\begin{split}
(\mu(K_{f_{1}}), \lambda(K_{f_{1}})) &= (2p, 1), \ \
(\mu(K_{f_{2}}), \lambda(K_{f_{2}})) = (1, 1), \\
(\mu(K_{f_{3}}), \lambda(K_{f_{3}})) &= ((a_{1} - 1)\cdots(a_{m} - 1), 0). 
\end{split}
\end{equation*}
If $\ell$ is a positive even integer, 
we set $p = \frac{\ell}{2}, P_{1} = f_{1}$ and $P_{2} = f_{3}$. 
By Corollary $1$, $K_P$ is also a simple fibered link. 
By Theorem $4$, we have 
\[
(\mu(K_{P}), \lambda(K_{P})) = (\ell(a_{1} - 1)\cdots(a_{m} - 1), (a_{1} - 1)\cdots(a_{m} - 1)\bmod 2). 
\]
We set $a_{i} = 2$ for $i = 1, \dots, m$. 
Then $((-1)^{k+1}\mu(K_{P}), \lambda(K_P))$ is equal to $((-1)^{k+1}\ell, 1)$. 

If $\ell$ is a positive odd integer, put $P_{1} = f_{2}$ and $P_{2} = f_{3}$. Then we have 
\[
(\mu(K_{P}), \lambda(K_{P})) = ((a_{1} - 1)\cdots(a_{m} - 1), (a_{1} - 1)\cdots(a_{m} - 1)\bmod 2). 
\]
We set 
$a_{1} = \ell +1$ and $a_{i} = 2$ for $i = 2, \dots, m$. 
Then $((-1)^{k+1}\mu(K_{P}), \lambda(K_P))$ is equal to $((-1)^{k+1}\ell, 1)$. 
\end{proof}


\begin{thebibliography}{99}
\bibitem{ACT}
  R. N. Ara\'{u}jo dos Santos, Y. Chen and M. Tib\u{a}r,
  \textit{Singular open book structures from real mappings},
  Cent. Eur. J. Math. $\mathbf{11}$ (2013), 817--828.

\bibitem{B}
  B. Bode,
  \textit{Constructing links of isolated singularities of polynomials $\Bbb{R}^{4} \rightarrow \Bbb{R}^{2}$},
  J. Knot Theory Ramifications $\mathbf{28}$ (2019), 21 pp.

\bibitem{BV}
  D. Burghelea and A. Verona, 
  \textit{Local homological properties of analytic sets},
  Manuscripta Math. $\mathbf{7}$ (1972), 55--66.

\bibitem{C}
  J. L. Cisneros-Molina,
  \textit{Join theorem for polar weighted homogeneous singularities},
  Singularities II, edited by J.~P.~Brasselet, J. L. Cisneros-Molina, D. Massey, J. Seade and B. Teissier,  
   Contemp, Math. $\mathbf{475}$, Amer. Math. Soc., Providence, RI, 2008, 43--59. 

\bibitem{D}
  A. Dold,
  \textit{Partitions of unity in the theory of fibrations},
  Ann. of Math. $\mathbf{78}$ (1963), 223--255.

\bibitem{Du}
  A. Durfee,
  \textit{Fibered knots and algebraic singularities},
  Topology, $\mathbf{13}$ (1974), 47--59.

\bibitem{EN}
  D. Eisenbud and W. Neumann,
  Three-Dimensional Link Theory and Invariants of Plane Curve Singularities,
  Annals of Mathematics Studies 110, Princeton University Press, Princeton, N.J., 1985.


\bibitem{EO}
  C. Eyral and M. Oka,
  \textit{Whitney regularity and Thom condition for families of non-isolated mixed singularities},
  J. Math. Soc. Japan. $\mathbf{70}$ (2018), 1305--1336.

\bibitem{GWPL}
  C. Gibson, K. Wirthm\"{u}ller, A. du Plessis and E. Looijenga,
  Topological stability of smooth mappings, 
  Lecture Notes in Mathematics $\mathbf{552}$, Springer--Verlag, Berlin--New York, 1976.

\bibitem{H}
  H. Hironaka,
  \textit{Subanalytic sets},
  Number theory, algebraic geometry and commutative algebra, in honor of Yasuo Akizuki,
  Kinokuniya, Tokyo, 1973, 453--493. 

\bibitem{In1}
  K. Inaba,
  \textit{On the enhancement to the Milnor number of a class of mixed polynomials},
  Journal of the Mathematical Society of Japan, Vol. 66, No. 1 (2014), 25--36.  


\bibitem{In2}
  K. Inaba,
  \textit{On fibered links of singularities of polar weighted homogeneous mixed polynomials},
  Singularities in Geometry and Topology 2011, Advanced Studies in Pure Mathematics $\mathbf{66}$ (2015), 81--92. 


\bibitem{KN}
  L. H. Kauffman and W. D. Neumann,
  \textit{Products of knots, branched fibrations and sums of singularities},
  Topology, $\mathbf{16}$ (1977), 369--393.


\bibitem{L}
  D. Lines,
  \textit{Stable plumbing for high odd-dimensional fibred knots},
  Canad. Math. Bull. $\mathbf{30}$ (1987), 429--435.

\bibitem{Lo}
  S. Lojasiewicz, 
  \textit{Triangulation of semi-analytic set}, 
  Ann. Scuola Norm. Sup. Pisa Cl, Sci. $\mathbf{18}$ (1964), 449--474.
  
  
\bibitem{M}
  J. Milnor, 
  \textit{Construction of universal bundles, II},
  Ann. of Math. $\mathbf{63}$ (1956), 430--436. 
  
\bibitem{M2}
  J. Milnor,
  Singular points of complex hypersurfaces,
  Annals of Mathematics Studies 61, Princeton University Press, Princeton, N.J., 1968.
  
\bibitem{NR1}
  W. Neumann and L. Rudolph,
  \textit{Unfoldings in knot theory},
  Math. Ann. $\mathbf{278}$ (1987), 409--439.

\bibitem{NR2}
  W. Neumann and L. Rudolph,
  \textit{The enhanced Milnor number in high dimensions},
  Differential Topology Proceedings, Siegen 1987, 
  edited by U. Koschorke, 
  Springer Lecture Notes in Math. 1350, Springer-Verlag, 1988, pp.109--121.

\bibitem{NR3}
  W. Neumann and L. Rudolph,
  \textit{Difference index of vectorfields and the enhanced Milnor number},
  Topology, $\mathbf{29}$ (1990), 83--100.

  
\bibitem{O-1}
  M. Oka, 
  \textit{On the homotopy types of hypersurfaces defined by weighted homogeneous polynomials},
  Topology $\mathbf{12}$ (1973), 19--32.
  
\bibitem{O0}
  M. Oka, 
  Non-degenerate complete intersection singularity, 
  Hermann, Paris, 1997. 
\bibitem{O1}
 M. Oka,
  \textit{Non-degenerate mixed functions},
  Kodai Math. J. $\mathbf{33}$ (2010),
  1--62.
 

\bibitem{O2}
  M. Oka,
  \textit{On Milnor fibrations, $a_f$-condition and boundary stability},
  Kodai Math. J. $\mathbf{38}$ (2015), 581--603.


\bibitem{RS}
  C.P. Rourke and B.J. Sanderson,
  Introduction to piecewise--linear topology,
  Ergebnisse der Mathematik und ihrer Grenzgebiete, Band 69. Springer--Verlag, New York--Heidelberg, 1972.


\bibitem{S1}
  K. Sakamoto,
  \textit{The Seifert matrices of Milnor fiberings defined by holomorphic functions},
  J. Math. Soc. Japan. $\mathbf{26}$ (1974), 714--721.


\bibitem{S2}
  K. Sakamoto,
  \textit{Milnor fiberings and their characteristic maps},
  In Manifolds--Tokyo 1973 (Proc. Internat. Conf., Tokyo, 1973), Univ. Tokyo Press, (1975), 145--150.

\bibitem{ST}
  M. Sebastiani and R. Thom, 
  \textit{Un r\'{e}sultat sur la monodromie},
  Invent. Math. $\mathbf{13}$ (1971), 90--96.

\bibitem{Sp}
  E.H. Spanier, 
  Algebraic topology, MacGraw Hill, 1966.

\bibitem{W}
  J. A. Wolf, 
  \textit{Differentiable fibre spaces and mappings compatible with Riemannian metrics}, 
  Michigan Math. J. $\mathbf{11}$ (1964), 65--70. 


\end{thebibliography}
\end{document}